\theoremstyle{plain}
\newtheorem{thm}{Theorem}[section]
\newtheorem{lemm}[thm]{Lemma}
\newtheorem{cor}[thm]{Corollary}
\theoremstyle{definition}
\newtheorem{df}[thm]{Definition}
\newtheorem{rem}[thm]{Remark}
\renewcommand{\div}{\operatorname{div}}
\newcommand{\dB}{\dot{B}}
\newcommand{\supp}{\operatorname{supp}}
\renewcommand{\leq}{\leqslant}
\renewcommand{\geq}{\geqslant}
\newcommand{\n}[1]{{\left\|#1\right\|}}
\newcommand{\lp}[1]{\left[#1\right]}
\newcommand{\Mp}[1]{\left\{#1\right\}}
\renewcommand{\sp}[1]{\left(#1\right)}
\begin{document}
\title[$2$D stationary Navier--Stokes equations around a uniform flow]
{Well-posedness of the two-dimensional stationary Navier--Stokes equations around a uniform flow}
\author[M.~Fujii]{Mikihiro Fujii}
\address[M.~Fujii]{Institute of Mathematics for Industry, Kyushu University, Fukuoka 819--0395, Japan}
\email[M.~Fujii]{fujii.mikihiro.096@m.kyushu-u.ac.jp}
\author[H.~Tsurumi]{Hiroyuki Tsurumi}
\address[H.~Tsurumi]{Graduate School of Technology, Industrial and Social Sciences, Tokushima University, Tokushima 770--8506, Japan}
\email[H.~Tsurumi]{tsurumi.hiroyuki@tokushima-u.ac.jp}
\keywords{{ the two-dimensional stationary Navier--Stokes equations; 
well-posedness;
the scaling critical framework; anisotropic Besov spaces.}
}
\subjclass[2020]{35Q35, 76D03, 76D05}
\begin{abstract}
In this paper, we consider the solvability of the two-dimensional stationary Navier--Stokes equations on the whole plane $\mathbb{R}^2$.
In \cite{Fu2023}, it was proved that the stationary Navier--Stokes equations on $\mathbb{R}^2$ is ill-posed for solutions around zero.
In contrast, considering solutions around the non-zero constant flow, 
the perturbed system has a better regularity in the linear part, which enables us to prove the unique existence of solutions in the scaling critical spaces of the Besov type.
\end{abstract}
\maketitle


\section{Introduction}\label{sec:intro}
In this paper, we consider the two-dimensional stationary Navier--Stokes equations on the whole plane $\mathbb{R}^2$:
\begin{align}
\label{NS}
    \begin{cases}
        -\Delta v + (v \cdot \nabla)v + \nabla p = f, \qquad & x \in \mathbb{R}^2,\\
        \div v = 0, \qquad & x \in \mathbb{R}^2.
    \end{cases}
\end{align}
Here $v=(v_1(x),v_2(x)) $ and $p=p(x)$ denote the unknown velocity vector and the unknown pressure of the fluid at the point $x=(x_1,x_2)\in \mathbb{R}^2$, respectively, while $f=(f_1(x), f_2(x))$ is the given external force.

This system has been studied for a long time especially in the exterior domain $\mathbb{R}^2\backslash\Omega$, where $\Omega$ is a bounded domain with smooth boundary. 
In this case, the system \eqref{NS} is considered in $\mathbb{R}^2\backslash\Omega$, and the following boundary conditions are assumed:
\begin{align*}
    \begin{cases}
        v(x)=v_b(x),& x \in \partial\Omega,\\
        v(x) \to v_\infty, \qquad & |x| \to \infty,
    \end{cases}
\end{align*}
where $v_b$ is a given boundary data and $v_\infty$ is a constant vector.
This problem becomes hard if $v_b\equiv 0$ and $v_\infty=0$, since in the two-dimensional space, we cannot treat the nonlinear term $(v\cdot\nabla) v$ as a perturbation of the linear term at the spatial infinity. 
This problem stems from the fact that the stationary Stokes equations in the same situation have no solution, which is called the Stokes' paradox. 
Instead, Finn--Smith \cite{FS1967} solved this system with $f\equiv 0$ when $v_\infty$ is small but non-zero, by analysis on the linearized term $-\Delta u+(v_\infty\cdot\nabla) u+\nabla p$, which is called Ossen operator.
On the other hand, in the case $v_\infty=0$, there are some studies solving this problem under special conditions. 
For example, Amick \cite{Am1984} solved when an exterior domain is invariant under the transformation $(x_1, x_2)\mapsto (-x_1, x_2)$, and this work was later generalized  by Pileckas and Russo \cite{PR2012} for instance.
For the disk $\Omega=\{x\in\mathbb{R}^2; |x|< 1\}$, Hillairet and Wittwer \cite{HW2013} constructed stationary solutions {around the swirl flow $\mu x^\perp/|x|^2$ for $f\equiv 0$ and $v_b=\mu x^\perp$} with a sufficiently large constant $\mu$.  
Later on, Yamazaki \cite{Ya2016} showed the existence of solutions for every $f=\div  F$ with $F\in (L^2(\mathbb{R}^2\backslash\Omega))^4$ which is invariant under some symmetric group action.
For the rotating obstacle $\Omega$, we refer to notable studies by Hishida \cite{Hi1999}, Higaki, Maekawa, and Nakahara \cite{HMN2018}, and Gallagher, Higaki, and Maekawa \cite{GHM2019}. 
There are also several previous results on the whole plane $\mathbb{R}^2$.
For example, Yamazaki \cite{Ya2009} solved \eqref{NS} with $v\in L^{2,\infty}(\mathbb{R}^2)$ when given external forces decay sufficiently and have some anti-symmetric forms.
After that, Guillod \cite{Gu2015} showed the existence of a pair $(f, v)$ satisfying  \eqref{NS}, where $f$ is dependent on $v$ and is around an arbitrarily given small vector field having zero integral and decaying faster than $|x|^{-3}$.
Moreover, Guillod and Wittwer \cite{GW2015} found scaling invariant solutions to \eqref{NS} with respect to a rotation conversion.
Recently, Maekawa and the second author \cite{MT2023} showed the existence of classical solutions to external forces given around $\nabla^\perp \phi$, where $\phi$ is {radial compactly} supported function.

However, there still remains the problem on the {unique} existence of small solutions and those continuous dependence {on} given any small external forces in $\mathbb{R}^2$. 
In this paper, we call it the well-posedness problem around $(f, v)=(0, 0)$. 
Such a problem in $\mathbb{R}^n$ with $n \geq 3$ has been already investigated well by previous studies in the framework of critical spaces with regard to the invariant scaling $(f(x), v(x))\mapsto (\lambda^3f(\lambda x), \lambda v(\lambda x))$ { ($\lambda>0$) for \eqref{NS}}.
It is well-known as the Fujita--Kato principle (see
\cite{FK1964}) that considering the solvability of partial differential equations in the scaling critical spaces is important, and there are many results for the non-stationary Navier--Stokes flow; see
the works of Kato \cite{Ka1984}, Cannone--Planchon \cite{CP1996}, and Koch--Tataru \cites{KT2001}.
For the well-posedness of the stationary problem around zero flow on $\mathbb{R}^n$ with $n \geq 3$ in the scaling critical framework, we refer to Chen \cite{Ch1993} for the Lebesgue space, Kozono--Yamazaki \cite{KY1995} for the Morrey space, and Kaneko--Kozono--Shimizu \cite{KKS2019} for the Besov space;
as for the ill-posedness by the lack of continuity of the solution map $f\mapsto v$ in some large Besov norms, see the second author's previous work \cite{Ts2019}.
In contrast, the well-posedness problem around $(f, v)=(0, 0)$ in $\mathbb{R}^2$ seems extremely difficult, because of a similar reason to the problem on exterior domains stated above.
Indeed, the first author \cite{Fu2023} recently showed the ill-posedness by the lack of continuity of the solution map  
{
$\dot B^{2/p-3}_{p,1}(\mathbb{R}^2) \ni f \mapsto u \in \dot B^{2/p-1}_{p,1}(\mathbb{R}^2)$
}
with $1\leq p\leq 2$, which is the strongest ill-posedness framework among scaling invariant Besov spaces,
which implies that the solvability of the two dimensional solutions on the whole plane in just scaling critical spaces is almost hopeless unless we assume some spatial symmetric structure or some strong conditions presented in \cite{Ya2009,Gu2015,Ya2016}.
Therefore, it seems more hopeful to consider the well-posedness problem around the special solutions than around the trivial solution $v=0$.
In this sense, for example, the result of \cite{MT2023} can be regarded as the well-posedness around an exact solution to $f=\nabla^\perp\phi$.
However, there seems to be very few previous results in this direction.

In this paper, we consider the unique existence of solutions around a non-zero uniform flow.
Namely, we treat the well-posedness problem in the whole plane $\mathbb{R}^2$ around $(f, v)=(0, v_{\infty})$, where $v_{\infty} \in \mathbb{R}^2 \setminus \{ 0 \}$ is a given constant vector.
For this purpose, we aim to construct a solution $v=v_{\infty}+u$ with some perturbation $u$.
Using the suitable orthogonal transformation, we may assume that $v_{\infty} = \alpha e_1 = (\alpha, 0)$, where $\alpha = |v_{\infty}| >0$, and $e_1:=(1,0)^T$.
In addition, for the sake of simplicity of discussion, we assume here that the external force has the divergence form $f=\div F$ with some $F=\left\{F_{jk}(x)\right\}_{j,k=1,2}$.
Then, the perturbation $u$ should satisfy
\begin{align}\label{eq:u}
    \begin{cases}
        -\Delta u + \alpha \partial_{x_1}u = \mathbb{P}\div (F - u \otimes u), \qquad & x \in \mathbb{R}^2,\\
        \div u = 0, \qquad & x \in \mathbb{R}^2,
    \end{cases}
\end{align}
where $\mathbb{P}=\left\{ \delta_{jk}+\partial_{x_j}\partial_{x_k}(-\Delta)^{-1}\right\}_{j,k=1,2}$ denotes the Helmholtz projection.
Then we show the well-posedness of the system \eqref{eq:u} around $(F, u)=(0, 0)$ in critical spaces with respect to the invariant scaling $(F, u)\mapsto (\lambda^2 F(\lambda\cdot), \lambda u(\lambda \cdot))$. Here, the invariant scaling means that if $(F, u)$ satisfies \eqref{eq:u}, then $(\lambda^2 F(\lambda\cdot), \lambda u(\lambda \cdot))$ satisfies \eqref{eq:u} with $\alpha$ replaced by $\lambda\alpha$.
Although there is a difference between the whole plane and the exterior domain, our result is similar to the result of Finn--Smith \cite{FS1967}. 
In contrast to their result, which only deal with a trivial external force $f\equiv 0$, it is significant in the sense that we can treat non-trivial external forces. Moreover, in the two-dimensional problem with few previous studies, our result presents a new category of the space of external force guaranteeing the existence of solutions.

Focusing on the anisotropy of the system \eqref{eq:u}, we first introduce the anisotropic Besov spaces. 
More precisely, we consider the Littlewood--Paley decomposition $g=\sum_{j\in\mathbb{Z}}\Delta_j g$ only for $x_2$-direction, and we treat $x_1$ like a time variable. 
From this point of view, we apply the Fourier transform only on $x_2$, and solve the equation as an ordinary {differential} equation on $x_1$. 
In addition, in order to utilize the effect of $\alpha e_1$, we define the hybrid anisotropic Besov norms. 
Concretely, we decompose a function $g$ into the high frequency part $\{\Delta_jg\}_{2^j>\alpha}$ and the low frequency part $\{\Delta_jg\}_{2^j\leq\alpha}$, and take the anisotropic Besov norm differently. 
Here we should multiply the norm of high frequency part by a coefficient $\alpha^{-1/p_1}$ ($p_1$ is the integrability index for $x_1$) in order to show the required estimates and to apply the Banach fixed point principle. 
This assumption implies that if $\alpha$ is large, then the required smallness condition for $F$ becomes relaxed. Conversely, as $\alpha\to +0$, such smallness condition becomes severe.

This paper is organized as follows. 
In the next section, we will reformulate the problem so that we clarify our direction to prove the well-posedness.
In the third section, we will define the hybrid anisotropic Besov norms and state our main theorem.
After that, we show some key estimates in the fourth section, and using them, we prove our main theorem in the fifth section.

\section{Reformulation of the problem}\label{sec:reform}
To rewrite the equation \eqref{eq:u},
we first consider the linearized problem 
\begin{align}\label{eq:lin}
    \begin{cases}
        -\Delta u + \alpha \partial_{x_1}u  = \mathbb{P}\div F, \qquad & x \in \mathbb{R}^2,\\
        \div u = 0, \qquad & x \in \mathbb{R}^2.
    \end{cases}
\end{align}
Here, it follows from $\partial_{x_1}^2(-\Delta)^{-1}=-1-\partial_{x_2}^2(-\Delta)^{-1}$ that 
\begin{align}
    &\mathbb{P}\div F
    ={}
    \begin{bmatrix}
        1 + \partial_{x_1}^2(-\Delta)^{-1} & \partial_{x_1}\partial_{x_2}(-\Delta)^{-1} \\
        \partial_{x_1}\partial_{x_2}(-\Delta)^{-1} & 1 + \partial_{x_2}^2(-\Delta)^{-1} 
    \end{bmatrix}
    \begin{bmatrix}
        \partial_{x_1}F_{11} + \partial_{x_2}F_{12} \\
        \partial_{x_1}F_{21} + \partial_{x_2}F_{22}
    \end{bmatrix}\\
    &\quad
    ={}
    \begin{bmatrix}
        -\partial_{x_2}^2(-\Delta)^{-1} & \partial_{x_1}\partial_{x_2}(-\Delta)^{-1} \\
        \partial_{x_1}\partial_{x_2}(-\Delta)^{-1} & 1 + \partial_{x_2}^2(-\Delta)^{-1} 
    \end{bmatrix}
    \begin{bmatrix}
        \partial_{x_1}F_{11} + \partial_{x_2}F_{12} \\
        \partial_{x_1}F_{21} + \partial_{x_2}F_{22}
    \end{bmatrix}\\
    &\quad
    ={}
    \begin{bmatrix}
        -\partial_{x_2}^2(-\Delta)^{-1}
        (\partial_{x_1}F_{11} + \partial_{x_2}F_{12}) 
        +
        \partial_{x_1}\partial_{x_2}(-\Delta)^{-1}
        (\partial_{x_1}F_{21} + \partial_{x_2}F_{22})\\
        \partial_{x_1}\partial_{x_2}(-\Delta)^{-1}
        (\partial_{x_1}F_{11} + \partial_{x_2}F_{12})
        +
        (1 + \partial_{x_2}^2(-\Delta)^{-1})
        (\partial_{x_1}F_{21} + \partial_{x_2}F_{22})
    \end{bmatrix}\\
    &\quad
    ={}
    -
    \begin{bmatrix}
        \partial_{x_2}F_{21}
        +
        (-\Delta)^{-1}\partial_{x_2}^3(F_{12}+F_{21})
        +
        \partial_{x_1}(-\Delta)^{-1}\partial_{x_2}^2(F_{11}-F_{22})
        \\
        \{
        \partial_{x_2}
        +
        (-\Delta)^{-1}\partial_{x_2}^3
        \}
        (F_{11}-F_{22})
        -
        \partial_{x_1}
        \{ F_{21} + (-\Delta)^{-1}\partial_{x_2}^2(F_{12}+F_{21}) \}
    \end{bmatrix}.
\end{align}
Thus, we see that 
\begin{align}
    &\begin{aligned}
    \partial_{x_1}^2 u_1 - \alpha \partial_{x_1}u_1 + \partial_{x_2}^2 u_1
    ={}&
    \partial_{x_2}F_{21}
    +
    (-\Delta)^{-1}\partial_{x_2}^3(F_{12}+F_{21})\\
    &
    +
    \partial_{x_1}(-\Delta)^{-1}\partial_{x_2}^2(F_{11}-F_{22}),
    \end{aligned}\\
    &\begin{aligned}
    \partial_{x_1}^2 u_2 - \alpha \partial_{x_1}u_2 + \partial_{x_2}^2 u_2
    ={}&
    \partial_{x_2}(F_{11}-F_{22})
    +
    (-\Delta)^{-1}\partial_{x_2}^3(F_{11}-F_{22})\\
    &
    -
    \partial_{x_1}
    \{ F_{21} + (-\Delta)^{-1}\partial_{x_2}^2(F_{12}+F_{21}) \}.
    \end{aligned}
\end{align}
In order to obtain the explicit solution formula for the above equations,
we consider the following equations:
\begin{align}
    &
    \partial_{x_1}^2 w^{(0)} - \alpha \partial_{x_1}w^{(0)} + \partial_{x_2}^2 w^{(0)}
    ={}
    g,\label{w^0}\\
    &
    \partial_{x_1}^2 w^{(1)} - \alpha \partial_{x_1}w^{(1)} + \partial_{x_2}^2 w^{(1)}
    ={}
    \partial_{x_1}g,\label{w^1}\\
    &
    \partial_{x_1}^2 \widetilde{w}^{(0)} - \alpha \partial_{x_1}\widetilde{w}^{(0)} + \partial_{x_2}^2 \widetilde{w}^{(0)}
    ={}
    (-\Delta)^{-1}g,\label{tw^0}\\
    &
    \partial_{x_1}^2 \widetilde{w}^{(1)} - \alpha \partial_{x_1}\widetilde{w}^{(1)} + \partial_{x_2}^2 \widetilde{w}^{(1)}
    ={}
    \partial_{x_1}(-\Delta)^{-1}g.\label{tw^1}
\end{align}
Applying the Fourier transform of \eqref{w^0} with respect to $x_2$, we see that 
\begin{align}\label{Fw^0}
    \partial_{x_1}^2 \widehat{w^{(0)}} - \alpha \partial_{x_1}\widehat{w^{(0)}} - \xi_2^2 \widehat{w^{(0)}}
    ={}
    \widehat{g}
\end{align}
and the corresponding eigen frequencies are given by 
\begin{align}
    \lambda_{\pm}(\xi_2)
    =
    \frac{\alpha \pm \sqrt{\alpha^2 + 4\xi_2^2}}{2}.
\end{align}
Hence, solving the ordinary differential equation \eqref{Fw^0}, we have
\begin{align}
    \widehat{w^{(0)}}(x_1,\xi_2)
    ={}&
    -
    \frac{1}{\sqrt{\alpha^2 + 4\xi_2^2}}
    \int_{-\infty}^{x_1}e^{\lambda_{-}(\xi_2)(x_1-y_1)}\widehat{g}(y_1,\xi_2)dy_1\\
    &
    -
    \frac{1}{\sqrt{\alpha^2 + 4\xi_2^2}}
    \int_{x_1}^{\infty}e^{\lambda_{+}(\xi_2)(x_1-y_1)}\widehat{g}(y_1,\xi_2)dy_1\\
    =:{}&
    \widehat{\mathcal{D}^{(0)}}[g](x_1,\xi_2).
\end{align}
For the solution to \eqref{w^1}, by the similar computation and the integration by parts, there holds
\begin{align}
    \widehat{w^{(1)}}(x_1,\xi_2)
    ={}&
    -
    \frac{1}{\sqrt{\alpha^2 + 4\xi_2^2}}
    \int_{-\infty}^{x_1}e^{\lambda_{-}(\xi_2)(x_1-y_1)}\partial_{y_1}\widehat{g}(y_1,\xi_2)dy_1\\
    &
    -
    \frac{1}{\sqrt{\alpha^2 + 4\xi_2^2}}
    \int_{x_1}^{\infty}e^{\lambda_{+}(\xi_2)(x_1-y_1)}\partial_{y_1}\widehat{g}(y_1,\xi_2)dy_1\\
    ={}&
    -
    \frac{\lambda_{-}(\xi_2)}{\sqrt{\alpha^2 + 4\xi_2^2}}
    \int_{-\infty}^{x_1}e^{\lambda_{-}(\xi_2)(x_1-y_1)}\widehat{g}(y_1,\xi_2)dy_1\\
    &
    -
    \frac{\lambda_{+}(\xi_2)}{\sqrt{\alpha^2 + 4\xi_2^2}}
    \int_{x_1}^{\infty}e^{\lambda_{+}(\xi_2)(x_1-y_1)}\widehat{g}(y_1,\xi_2)dy_1\\
    =:{}&
    \widehat{\mathcal{D}^{(1)}}[g](x_1,\xi_2).
\end{align}
In order to consider \eqref{tw^0} and \eqref{tw^1}, we remark that it holds
\begin{align}
    &
    \begin{aligned}
    \mathscr{F}_{x_2}
    \left[(-\Delta)^{-1}g\right](x_1,\xi_2)
    ={}&
    \mathscr{F}_{x_1}^{-1}
    \left[\frac{1}{\xi_1^2+\xi_2^2}\mathscr{F}_{x_1,x_2}[g](\xi_1,\xi_2)\right](x_1)\\
    ={}&
    \frac{1}{2|\xi_2|}
    \int_{\mathbb{R}}
    e^{-|\xi_2||x_1-y_1|}
    \widehat{g}(y_1,\xi_2)dy_1,
    \end{aligned}\\
    &
    \begin{aligned}
    \mathscr{F}_{x_2}
    \left[\partial_{x_1}(-\Delta)^{-1}g\right](x_1,\xi_2)
    ={}&
    \mathscr{F}_{x_1}^{-1}
    \left[\frac{i\xi_1}{\xi_1^2+\xi_2^2}\mathscr{F}_{x_1,x_2}[g](\xi_1,\xi_2)\right](x_1)\\
    ={}&
    -\frac{1}{2}
    \int_{\mathbb{R}}
    \operatorname{sgn}(x_1-y_1)
    e^{-|\xi_2||x_1-y_1|}
    \widehat{g}(y_1,\xi_2)dy_1.
    \end{aligned}
\end{align}
Thus, we have
\begin{align}
    \widehat{\widetilde{w}^{(0)}}(x_1,\xi_2)
    ={}&
    -
    \frac{1}{2|\xi_2|\sqrt{\alpha^2 + 4\xi_2^2}}
    \int_{-\infty}^{x_1}e^{\lambda_{-}(\xi_2)(x_1-y_1)}
    \int_{\mathbb{R}}
    e^{-|\xi_2||y_1-z_1|}
    \widehat{g}(z_1,\xi_2)
    dz_1
    dy_1\\
    &{} 
    -
    \frac{1}{2|\xi_2|\sqrt{\alpha^2 + 4\xi_2^2}}
    \int_{x_1}^{\infty}
    e^{\lambda_{+}(\xi_2)(x_1-y_1)}
    \int_{\mathbb{R}}
    e^{-|\xi_2||y_1-z_1|}
    \widehat{g}(z_1,\xi_2)
    dz_1
    dy_1\\
    =:{}&
    \widehat{\widetilde{\mathcal{D}}^{(0)}}[g](x_1,\xi_2)
\end{align}
and 
\begin{align}
    \widehat{\widetilde{w}^{(1)}}(x_1,\xi_2)
    ={}&
    \frac{1}{2\sqrt{\alpha^2 + 4\xi_2^2}}
    \int_{-\infty}^{x_1}e^{\lambda_{-}(\xi_2)(x_1-y_1)}\\
    &\qquad \qquad \qquad  
    \times
    \int_{\mathbb{R}}
    \operatorname{sgn}(y_1-z_1)
    e^{-|\xi_2||y_1-z_1|}
    \widehat{g}(z_1,\xi_2)
    dz_1
    dy_1
    \\
    &
    +
    \frac{1}{2\sqrt{\alpha^2 + 4\xi_2^2}}
    \int_{x_1}^{\infty}
    e^{\lambda_{+}(\xi_2)(x_1-y_1)}\\
    &\quad \qquad \qquad \qquad
    \times
    \int_{\mathbb{R}}
    \operatorname{sgn}(y_1-z_1)
    e^{-|\xi_2||y_1-z_1|}
    \widehat{g}(z_1,\xi_2)
    dz_1
    dy_1
    \\
    =:{}&
    \widehat{\widetilde{\mathcal{D}}^{(1)}}[g](x_1,\xi_2).
\end{align}
Hence, we obtain 
\begin{align}
    &\begin{aligned}
    \widehat{u_1}(x_1,\xi_2)
    ={}&
    \widehat{{\mathcal{D}}^{(0)}}
    \lp{\partial_{x_2}F_{21}}(x_1,\xi_2)
    +
    \widehat{\widetilde{\mathcal{D}}^{(0)}}
    \lp{\partial_{x_2}^3(F_{12}+F_{21})}(x_1,\xi_2)\\
    &
    +
    \widehat{\widetilde{\mathcal{D}}^{(1)}}
    \lp{\partial_{x_2}^2(F_{11}-F_{22})}(x_1,\xi_2)\\
    =:{}&
    \widehat{\mathcal{D}_1}[F](x_1,\xi_2),
    \end{aligned}\\
    &\begin{aligned}
    \widehat{u_2}(x_1,\xi_2)
    ={}&
    \widehat{{\mathcal{D}}^{(0)}}
    \lp{\partial_{x_2}(F_{11}-F_{22})}(x_1,\xi_2)
    +
    \widehat{\widetilde{\mathcal{D}}^{(0)}}
    \lp{\partial_{x_2}^3(F_{11}-F_{22})}(x_1,\xi_2)\\
    &
    -
    \widehat{{\mathcal{D}}^{(1)}}
    \lp{ F_{21}} (x_1,\xi_2)
    {-}
    \widehat{\widetilde{\mathcal{D}}^{(1)}}
    \lp{\partial_{x_2}^2(F_{12}+F_{21}) }(x_1,\xi_2)\\
    =:{}&
    \widehat{\mathcal{D}_2}[F](x_1,\xi_2).
    \end{aligned}
\end{align}
and let $\mathcal{D}[F] = (\mathcal{D}_1[F],\mathcal{D}_2[F])$, where $\mathcal{D}_m[F]:=\mathscr{F}^{-1}_{x_2}\lp{\widehat{\mathcal{D}_m}[F]}$ for $m=1,2$.

From the above calculations, we define the notion of the mild solution to \eqref{eq:u}.
\begin{df}
For a given external force $F$, we say that a vector field $u=(u_1(x),u_2(x))$ is a mild solution to \eqref{eq:u} if $u$ satisfy 
\begin{align}\label{eq:mild-u}
    u = \mathcal{D}[F - u \otimes u].
\end{align}
\end{df}

In the following of this paper, we consider the equation \eqref{eq:mild-u} instead of \eqref{eq:u}.

\section{Main results}
In order to state the main results precisely, we introduce some function spaces.
Let $\phi_0 \in \mathscr{S}(\mathbb{R}_{\xi_2})$ satisfy 
\begin{align}
    \supp \phi_0 \subset \{ \xi_2 \in \mathbb{R}\ ; \ 2^{-1} \leq |\xi_2| \leq 2 \},\qquad
    0 \leq \phi_0(\xi_2) \leq 1,
\end{align}
and
\begin{align}
    \sum_{j \in \mathbb{Z}} 
    \phi_j(\xi_2)=1
    \qquad {\rm for\ all\ }\xi_2 \in \mathbb{R}\setminus \{0\},
\end{align}
where $\phi_j(\xi_2):=\phi_0(2^{-j}\xi_2)$.
Then, the Littlewood-Paley frequency localized operators $\{ \Delta_j \}_{j\in \mathbb
{Z}}$ are defined by 
\begin{align}
    \Delta_j f := \mathscr{F}^{-1}\lp{\phi_j(\xi_2)\mathscr{F}[f]}.
\end{align}
Now, for $1 \leq p_1,p_2,q \leq \infty$ and $s \in \mathbb{R}$,
we define the anisotropic Besov spaces as 
\begin{align}
    \dB_{p_1,p_2;q}^{s}(\mathbb{R}^2)
    :={}&
    \Mp{
    f: \mathbb{R}_{x_1} \to \mathscr{S}'(\mathbb{R}_{x_2})/\mathscr{P}(\mathbb{R}_{x_2})
    \ ; \ 
    \n{f}_{\dB_{p_1,p_2;q}^{s}}<\infty
    },\\
    \n{f}_{\dB_{p_1,p_2;q}^{s}}
    :={}&
    \n{
    \Mp{
    2^{sj}
    \n{\Delta_jf}_{L^{p_1}_{x_1}L^{p_2}_{x_2}}
    }_{j \in \mathbb{Z}}
    }_{\ell^{q}(\mathbb{Z})},
\end{align}
where 
$\mathscr{P}(\mathbb{R})$ denotes the set of all polynomials on $\mathbb{R}$ 
{and we} have used the abbreviation 
$L^{p_1}_{x_1}L^{p_2}_{x_2}:=L^{p_1}(\mathbb{R}_{x_1};L^{p_2}(\mathbb{R}_{x_2}))$.
For $\alpha > 0$, we define the hybrid anisotropic Besov norms as 
\begin{align}
    \n{f}_{\dB_{p_1,p_2;q}^{s}}^{h;\alpha}
    :={}&
    \n{
    \Mp{
    2^{sj}
    \n{\Delta_jf}_{L^{p_1}_{x_1}L^{p_2}_{x_2}}
    }_{2^j > \alpha}
    }_{\ell^{q}},\\
    \n{f}_{\dB_{p_1,p_2;q}^{s}}^{\ell;\alpha}
    :={}&
    \n{
    \Mp{
    2^{sj}
    \n{\Delta_jf}_{L^{p_1}_{x_1}L^{p_2}_{x_2}}
    }_{2^j \leq \alpha}
    }_{\ell^{q}}.
\end{align}
Using this we define the function spaces 
$D_{p_1,p_2;q}^{\alpha}(\mathbb{R}^2)$ and
$S_{p_1,p_2;q}^{\alpha}(\mathbb{R}^2)$ as the sets of all distributions $F$ and $u$ on $\mathbb{R}^2$ satisfying 
\begin{align}
    \n{F}_{D_{p_1,p_2;q}^{\alpha}}
    :={}&
    \alpha^{-\frac{1}{p_1}}
    \n{F}_{\dB_{p_1,p_2;q}^{\frac{2}{p_1} + \frac{1}{p_2} -2}}^{h;\alpha}
    +
    \n{F}_{\dB_{p_1,p_2;q}^{\frac{1}{p_1} + \frac{1}{p_2} -2}}^{\ell;\alpha}
    <
    \infty,\\
    \n{u}_{S_{p_1,p_2;q}^{\alpha}}
    :={}&
    \alpha^{-\frac{1}{p_1}}
    \n{u}_{\dB_{p_1,p_2;q}^{\frac{2}{p_1} + \frac{1}{p_2} -1}}^{h;\alpha}
    +
    \n{u}_{\dB_{p_1,p_2;q}^{\frac{1}{p_1} + \frac{1}{p_2} -1}}^{\ell;\alpha}
    <
    \infty,
\end{align}
respectively.
{
We say $D_{p_1,p_2;q}^{\alpha}(\mathbb{R}^2)$ and
$S_{p_1,p_2;q}^{\alpha}(\mathbb{R}^2)$ are the scaling critical data and solution spaces for \eqref{eq:u}, respectively.
Here, see Remark \ref{rem} for the meaning of the scaling critical spaces.}

Now, we state our main result of this article.
\begin{thm}\label{thm:1}
Let $p_1$, $p_2$, and $q$ satisfy
\begin{align}
    \max\Mp{\frac{1}{3}, \frac{2}{3}\sp{1-\frac{1}{p_2}}}
    <
    \frac{1}{p_1}
    \leq 
    \frac{1}{2},\qquad
    1 \leq p_2 < 4,\qquad
    1 \leq q \leq \infty.
\end{align}
Then, there exists a positive constant $\delta=\delta(p_1,p_2,q)$ and $\varepsilon=\varepsilon(p_1,p_2,q)$ such that 
for any $\alpha>0$ and $F \in D_{p_1,p_2;q}^{\alpha}(\mathbb{R}^2)$ with 
$\n{F}_{D_{p_1,p_2;q}^{\alpha}} \leq \delta$,
\eqref{eq:u} possesses a unique mild solution $u$ in the class 
\begin{align}
    \Mp{
    u \in S_{p_1,p_2;q}^{\alpha}(\mathbb{R}^2)\ ;\ 
    \n{u}_{S_{p_1,p_2;q}^{\alpha}} \leq \varepsilon
    }.
\end{align}
{
Moreover, the data-to-solution map $D_{p_1,p_2;q}^{\alpha}(\mathbb{R}^2) \ni F \mapsto u \in S_{p_1,p_2;q}^{\alpha}(\mathbb{R}^2)$ is Lipschitz continuous.
}
\end{thm}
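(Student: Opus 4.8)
The plan is to recast the mild formulation \eqref{eq:mild-u} as a fixed point problem and solve it by the Banach contraction principle. Writing $\Phi(u) := \mathcal{D}[F] - \mathcal{D}[u \otimes u]$, a mild solution is precisely a fixed point of $\Phi$, and I will look for it in the closed ball $B_\varepsilon := \Mp{u \in S_{p_1,p_2;q}^{\alpha}(\mathbb{R}^2) \ ;\ \n{u}_{S_{p_1,p_2;q}^{\alpha}} \leq \varepsilon}$ with the metric induced by $\n{\cdot}_{S_{p_1,p_2;q}^{\alpha}}$. Everything hinges on two estimates with constants independent of $\alpha$: a \emph{linear estimate} $\n{\mathcal{D}[F]}_{S_{p_1,p_2;q}^{\alpha}} \leq C_L \n{F}_{D_{p_1,p_2;q}^{\alpha}}$ and a \emph{bilinear estimate} $\n{\mathcal{D}[u \otimes v]}_{S_{p_1,p_2;q}^{\alpha}} \leq C_B \n{u}_{S_{p_1,p_2;q}^{\alpha}}\n{v}_{S_{p_1,p_2;q}^{\alpha}}$, which are the key estimates to be proved first. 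Abbreviating the two norms by $\n{\cdot}_{S^\alpha}$ and $\n{\cdot}_{D^\alpha}$, self-mapping follows from $\n{\Phi(u)}_{S^\alpha} \leq C_L\delta + C_B\varepsilon^2$, while the algebraic identity $u\otimes u - v\otimes v = (u-v)\otimes u + v\otimes(u-v)$ gives $\n{\Phi(u)-\Phi(v)}_{S^\alpha} \leq C_B\sp{\n{u}_{S^\alpha}+\n{v}_{S^\alpha}}\n{u-v}_{S^\alpha} \leq 2C_B\varepsilon\n{u-v}_{S^\alpha}$. Fixing $\varepsilon$ with $4C_B\varepsilon \leq 1$ and then $\delta$ with $2C_L\delta \leq \varepsilon$ makes $\Phi$ a contraction of $B_\varepsilon$ into itself, so the contraction principle yields the unique mild solution in $B_\varepsilon$. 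For the Lipschitz dependence, I subtract the two fixed point identities for data $F^{(1)},F^{(2)}$ with solutions $u^{(1)},u^{(2)} \in B_\varepsilon$, apply the linear and bilinear estimates to $u^{(1)}-u^{(2)} = \mathcal{D}[F^{(1)}-F^{(2)}] - \mathcal{D}[(u^{(1)}-u^{(2)})\otimes u^{(1)} + u^{(2)}\otimes(u^{(1)}-u^{(2)})]$, and absorb the quadratic term using $2C_B\varepsilon \leq 1/2$ to obtain $\n{u^{(1)}-u^{(2)}}_{S^\alpha} \leq 2C_L\n{F^{(1)}-F^{(2)}}_{D^\alpha}$.

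The substance of the proof, and the first real difficulty, is the linear estimate in the hybrid anisotropic setting. I would insert the explicit representations $\mathcal{D}^{(0)}, \mathcal{D}^{(1)}, \widetilde{\mathcal{D}}^{(0)}, \widetilde{\mathcal{D}}^{(1)}$ derived in Section \ref{sec:reform} into the $\dB^{s}_{p_1,p_2;q}$ norms, localize in $\xi_2$ through $\Delta_j$, and observe that after localization each operator is a convolution in $x_1$ against a kernel built from $e^{\lambda_\pm(\xi_2)(x_1-y_1)}$. Young's inequality in $x_1$ then produces a gain $|\lambda_\pm(\xi_2)|^{-1}$, while Bernstein's inequality in $x_2$ handles the remaining $\xi_2$-multipliers and the passage between $L^{p_2}_{x_2}$ exponents. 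The delicate point is the $\alpha$-dependence: in the low-frequency regime $2^j \leq \alpha$ one has $\lambda_-(\xi_2) \sim -\xi_2^2/\alpha$ and $\lambda_+(\xi_2) \sim \alpha$, whereas in the high-frequency regime $2^j > \alpha$ both eigenvalues behave like $\pm|\xi_2|$. These two regimes are exactly what the high/low splitting and the weight $\alpha^{-1/p_1}$ are designed to reconcile, and the computation must be arranged so that the powers of $\alpha$ cancel to leave an $\alpha$-independent constant and so that the one-step regularity shift between $D^\alpha$ and $S^\alpha$ emerges correctly from the gain $|\lambda_\pm|^{-1}$.

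The bilinear estimate is the main obstacle. Using the linear estimate, I reduce it to the product estimate $\n{u\otimes v}_{D^\alpha} \lesssim \n{u}_{S^\alpha}\n{v}_{S^\alpha}$, which I would establish by a Bony paraproduct decomposition in the $x_2$ variable together with Hölder's inequality in $x_1$ and $x_2$ and Bernstein's inequality to compensate for the loss of $x_1$-integrability caused by the pointwise product. The crux is the anisotropy: the variable $x_1$ plays the role of time, the product is local in it, and there is no smoothing available in $x_1$ beyond the convolution already built into $\mathcal{D}$, so all regularity must be extracted from the $x_2$-Besov structure. The admissibility conditions $\max\Mp{\frac{1}{3}, \frac{2}{3}\sp{1-\frac{1}{p_2}}} < \frac{1}{p_1} \leq \frac{1}{2}$ and $1 \leq p_2 < 4$ are precisely the constraints under which the paraproduct series converge and the Hölder and Bernstein exponents remain feasible simultaneously in both frequency regimes; confirming that these ranges make every paraproduct term close, uniformly in $\alpha$, is where the bulk of the work resides.
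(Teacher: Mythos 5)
Your overall architecture coincides with the paper's: a Banach fixed point argument in a small ball of $S_{p_1,p_2;q}^{\alpha}(\mathbb{R}^2)$, resting on a linear estimate for $\mathcal{D}$ proved by frequency localization in $\xi_2$ and Young's inequality in the $x_1$-convolution (with the two regimes $2^j>\alpha$ and $2^j\leq\alpha$ treated separately), and a bilinear estimate proved by a Bony paraproduct decomposition in $x_2$ under the stated conditions on $p_1,p_2$. The fixed point bookkeeping, the uniqueness statement, and the Lipschitz dependence are handled exactly as in the paper.

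There is, however, one concrete gap in your plan for the bilinear estimate. You propose to reduce $\n{\mathcal{D}[u\otimes v]}_{S_{p_1,p_2;q}^{\alpha}}$ to the product estimate $\n{u\otimes v}_{D_{p_1,p_2;q}^{\alpha}}\lesssim \n{u}_{S_{p_1,p_2;q}^{\alpha}}\n{v}_{S_{p_1,p_2;q}^{\alpha}}$ and to recover the lost $x_1$-integrability by Bernstein's inequality. This cannot work as stated: $D_{p_1,p_2;q}^{\alpha}$ is built on $L^{p_1}_{x_1}$, the pointwise product of two $L^{p_1}_{x_1}$-valued functions only lies in $L^{p_1/2}_{x_1}$, and there is no Bernstein inequality available in $x_1$ because the Littlewood--Paley decomposition is taken only in $x_2$ (as you yourself observe, the only smoothing in $x_1$ is the convolution inside $\mathcal{D}$). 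The paper resolves this by proving the linear estimate with a free input exponent $p_3\leq p_1$ in $x_1$ (Lemma \ref{lemm:lin-est}): Young's inequality against the kernel $e^{\lambda_{\pm}(2^j)(x_1-y_1)}$ then yields the weaker gain $|\lambda_{\pm}(2^j)|^{-1+\frac{1}{p_3}-\frac{1}{p_1}}$, and the choice $p_3=p_1/2$ converts an $L^{p_1/2}_{x_1}$-based norm of $u\otimes v$ into the $L^{p_1}_{x_1}$-based norm of $\mathcal{D}[u\otimes v]$ at the cost of extra powers of $2^j$ or $\alpha$. The product estimate that must actually be proved by paraproduct is therefore
\begin{align}
    \alpha^{-\frac{1}{p_1}}\n{u\otimes v}_{\dB_{\frac{p_1}{2},p_2;q}^{\frac{3}{p_1}+\frac{1}{p_2}-2}}
    \leq C
    \n{u}_{S_{p_1,p_2;q}^{\alpha}}\n{v}_{S_{p_1,p_2;q}^{\alpha}}
\end{align}
(Lemma \ref{lemm:nonlin-est}), and the constraints $1/p_1>\max\Mp{1/3,\tfrac{2}{3}(1-1/p_2)}$ and $p_2<4$ enter precisely to make the remainder term of this paraproduct converge. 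Once you replace your intermediate space by this $L^{p_1/2}_{x_1}$-based one and route the integrability gain through the linear operator rather than through Bernstein, the rest of your argument goes through as in the paper.
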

{
\begin{rem}\label{rem}
    Let us mention some remarks on Theorem \ref{thm:1}.
    \begin{enumerate}
        \item 
        If $(u,F)$ satisfies \eqref{eq:u} with some $\alpha$, then
        $(u_{\lambda}(x),F_{\lambda}(x)):=(\lambda u(\lambda x), \lambda^2 F(\lambda x))$ solves \eqref{eq:u} with $\alpha$ replaced by $\lambda\alpha$ for all $\lambda >0$.
        For this invariant scaling, there holds
        \begin{align}
            \n{F_{\lambda}}_{D_{p_1,p_2;q}^{\lambda \alpha}} 
            ={}&
            \n{F}_{D_{p_1,p_2;q}^{\alpha}} ,\\
            \n{u_{\lambda}}_{S_{p_1,p_2;q}^{\lambda \alpha}}
            ={}&
            \n{u}_{S_{p_1,p_2;q}^{\alpha}}
        \end{align}
        for all dyadic numbers $\lambda>0$,
        which means that the spaces $D_{p_1,p_2;q}^{\alpha}(\mathbb{R}^2)$ and $S_{p_1,p_2;q}^{\alpha}(\mathbb{R}^2)$ are the critical spaces for scalings $F_{\lambda}$ and $u_{\lambda}$, respectively.
        \item 
        In the function space used in Theorem \ref{thm:1}, the information on differentiability is given only for the $x_2$ direction; the study of solvability in a framework that can measure differentiability in both the $x_1$ and $x_2$ directions is a subject for future work.
    \end{enumerate}
\end{rem}
}
\section{Key estimates}
In this section,
we prepare several estimates that play key roles in the proof of our main results.
\begin{lemm}\label{lemm:fre-loc}
There {exist} positive {constants} $c$ and $C$ such that 
\begin{align}
    &
    \n{\Delta_j (\alpha^2 - \partial_{x_2}^2)^{-\frac{1}{2}} f}_{L^p}
    \leq{}
    \frac{C}{\sqrt{\alpha^2+2^{2j}}}
    \n{\Delta_j f}_{L^p},\\
    &
    \n{\Delta_j \lambda_{\pm}(D_2) f}_{L^p}
    \leq{}
    C
    |\lambda_{\pm}(2^j)|
    \n{\Delta_j f}_{L^p},\\
    &
    \n{\Delta_j e^{\mp\lambda_{\pm}(D_2)T}f}_{L^p}
    \leq
    C
    e^{\mp c \lambda_{\pm}(2^j)T}
    \n{\Delta_j f}_{L^p},\\
    &
    \n{\Delta_j e^{-|\partial_{x_2}|T}f}_{L^p}
    \leq
    C
    e^{-c 2^jT}
    \n{\Delta_j f}_{L^p}
\end{align}
for all $1 \leq p \leq \infty$, $\alpha \in \mathbb{R}$, $T >0$, $j \in \mathbb{Z}$ and $f \in \mathscr{S}'(\mathbb{R}_{x_2})$ with $\Delta_j f \in L^p(\mathbb{R}_{x_2})$.
\end{lemm}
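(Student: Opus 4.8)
The plan is to treat each of the four operators as a Fourier multiplier in $x_2$ acting on a function already localized at frequency $|\xi_2|\sim 2^j$, and to reduce every estimate to an $L^1$-bound on a convolution kernel via Young's inequality. Concretely, fix an auxiliary $\psi\in C_c^\infty(\mathbb{R}\setminus\{0\})$ with $\psi\equiv 1$ on $\supp\phi_0$, and set $\psi_j:=\psi(2^{-j}\cdot)$, so that $\psi_j\phi_j=\phi_j$. For a symbol $m=m(\xi_2)$, which is even in $\xi_2$ in all four cases, one then has $m(D_2)\Delta_j f=\mathscr{F}^{-1}_{x_2}[m\psi_j]\ast_{x_2}\Delta_j f$, whence $\n{m(D_2)\Delta_j f}_{L^p}\le \n{\mathscr{F}^{-1}_{x_2}[m\psi_j]}_{L^1}\n{\Delta_j f}_{L^p}$ for every $1\le p\le\infty$. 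Thus each claim reduces to estimating the $L^1$-norm of the kernel $\mathscr{F}^{-1}_{x_2}[m\psi_j]$, with $m$ equal, in turn, to $(\alpha^2+\xi_2^2)^{-1/2}$, to $\lambda_\pm(\xi_2)$, to $e^{\mp\lambda_\pm(\xi_2)T}$, and to $e^{-|\xi_2|T}$.

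Next I would remove the $j$-dependence by scaling. Substituting $\xi_2=2^j\eta$ yields the dilation identity $\n{\mathscr{F}^{-1}_{x_2}[m\psi_j]}_{L^1}=\n{\mathscr{F}^{-1}[N_j]}_{L^1}$ with $N_j(\eta):=m(2^j\eta)\psi(\eta)$, a function supported in the fixed annulus $\{1/4\le|\eta|\le 4\}$, on which all the symbols above (including the even ones $\lambda_\pm$ and $|\xi_2|$) are smooth. On such a fixed compact set one has the elementary one-dimensional bound $\n{\mathscr{F}^{-1}[N_j]}_{L^1}\le C(\n{N_j}_{L^\infty}+\n{N_j''}_{L^\infty})$, which I would prove by writing $(1+x_2^2)\mathscr{F}^{-1}[N_j]=\mathscr{F}^{-1}[(1-\partial_\eta^2)N_j]$ and combining $\int(1+x_2^2)^{-1}\,dx_2<\infty$ with the trivial $L^\infty$-bound for the inverse transform of an $L^1$-function of bounded support. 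It then remains to verify, symbol by symbol, the uniform estimates $\n{N_j}_{L^\infty}+\n{N_j''}_{L^\infty}\lesssim(\text{desired right-hand factor})$.

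The symbol estimates are elementary but must respect the two regimes $2^j\gtrsim\alpha$ and $2^j\lesssim\alpha$. The key observation making all of them work is the comparability $\sqrt{\alpha^2+2^{2j}}\sim\alpha+2^j$ across the annulus, together with the fact that one $\eta$-derivative of $\lambda_\pm(2^j\eta)$ has size $\sim 2^{2j}/\sqrt{\alpha^2+2^{2j}}$, which is $\lesssim|\lambda_\pm(2^j)|$ in both regimes; this gives the first two estimates directly, since differentiation does not enlarge the order of the symbol on $|\eta|\sim 1$. For the exponential multipliers I would use that $\lambda_\pm(2^j\eta)$ (resp. $2^j|\eta|$) stays comparable to $\lambda_\pm(2^j)$ (resp. $2^j$) on the annulus, so that $\n{N_j}_{L^\infty}\lesssim e^{\mp c\lambda_\pm(2^j)T}$; each $\eta$-derivative then produces a polynomial factor of size $\lesssim T\lambda_\pm(2^j)$ (resp. $T2^j$), which is absorbed by the elementary inequality $r^ke^{-cr}\le C_{k,c,c'}e^{-c'r}$ at the cost of replacing $c$ by a slightly smaller positive constant.

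The step I expect to be the main obstacle is precisely this regime-dependent bookkeeping for $\lambda_\pm$: because $\lambda_+(\xi_2)\to\alpha$ while $\lambda_-(\xi_2)\sim-\xi_2^2/\alpha$ as $\xi_2\to 0$, one must check by hand that the derivative bounds are dominated by $|\lambda_\pm(2^j)|$ uniformly in $\alpha>0$ and $j\in\mathbb{Z}$, rather than by a quantity that degenerates as $2^j/\alpha\to 0$. The comparability $\sqrt{\alpha^2+2^{2j}}\sim\alpha+2^j$ is exactly what reconciles the derivative size $2^{2j}/\sqrt{\alpha^2+2^{2j}}$ with $|\lambda_-(2^j)|\sim 2^{2j}/(\alpha+2^j)$ in both regimes, and once this comparability is recorded the remaining computations are routine.
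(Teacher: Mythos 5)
Your proposal is correct and follows essentially the same route as the paper: reduce via Young's inequality and $\widetilde{\Delta_j}\Delta_j=\Delta_j$ to an $L^1$ kernel bound, rescale to a fixed annulus, control the kernel by a weighted-decay argument exploiting smoothness of the symbol, and close with the key observation that $|\partial_{\xi_2}(\lambda_{\pm}(2^j\xi_2))|\sim 2^{2j}(\alpha^2+2^{2j})^{-1/2}\lesssim|\lambda_{\pm}(2^j)|$ uniformly in both regimes. The only cosmetic differences are that the paper bounds the kernel by the $H^1$-norm of the rescaled symbol (one derivative in $L^2$, via Cauchy--Schwarz with weight $(1+x_2^2)^{1/2}$) rather than two derivatives in $L^\infty$, and it simply cites Iwabuchi for the fourth estimate instead of proving it directly.
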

\begin{proof}
{ 
The last estimate is shown in \cite{I2015}. Thus, we prove the others.}
It follows from the Hausdorff--Young inequality and $\Delta_j=\widetilde{\Delta_j}\Delta_j$, where $\widetilde{\Delta_j}:=\Delta_{j-1}+\Delta_j+\Delta_{j+1}$ that 
\begin{align}
    \n{ \Delta_j(\alpha^2 - \partial_{x_2}^2)^{-\frac{1}{2}} f}_{L^p}
    ={}&
    \n{ \widetilde{\Delta_j}(\alpha^2 - \partial_{x_2}^2)^{-\frac{1}{2}} \Delta_jf}_{L^p}\\
    ={}&
    \n{ \mathscr{F}^{-1}\lp{ \widetilde{\phi_j}(\xi_2)(\alpha^2 + \xi_2^2 )^{-\frac{1}{2}} } * \Delta_jf}_{L^p}\\
    \leq{}&
    \n{ \mathscr{F}^{-1}\lp{ \widetilde{\phi_j}(\xi_2)(\alpha^2 + \xi_2^2 )^{-\frac{1}{2}} } }_{L^1}
    \n{ \Delta_jf}_{L^p}{,}
\end{align}
{where we have set $\widetilde{\phi_j}:=\phi_{j-1} + \phi_j + \phi_{j+1}$.}
Here, we see by the change of the variable that 
\begin{align}
    &
    \n{ \mathscr{F}^{-1}\lp{ \widetilde{\phi_j}(\xi_2)(\alpha^2 + \xi_2^2 )^{-\frac{1}{2}} }(x_2) }_{L^1(\mathbb{R}_{x_2})}\\
    &\quad ={}
    \n{ \mathscr{F}^{-1}\lp{ \widetilde{\phi_0}(2^{-j}\xi_2)(\alpha^2 + \xi_2^2 )^{-\frac{1}{2}} }(x_2) }_{L^1(\mathbb{R}_{x_2})}\\
    &\quad ={}
    \n{ 2^j\mathscr{F}^{-1}\lp{ \widetilde{\phi_0}(\xi_2)(\alpha^2 + 2^{2j}\xi_2^2 )^{-\frac{1}{2}} }(2^jx_2) }_{L^1(\mathbb{R}_{x_2})}\\
    &\quad ={}
    \n{ \mathscr{F}^{-1}\lp{ \widetilde{\phi_0}(\xi_2)(\alpha^2 + 2^{2j}\xi_2^2 )^{-\frac{1}{2}} }(x_2) }_{L^1(\mathbb{R}_{x_2})}\\
    &\quad \leq{}
    {\left(\int_{\mathbb{R}}
    \frac{1}{1+x_2^2}dx_2\right)^{\frac{1}{2}}}
    \n{ (1 + x_2^2)^{\frac{1}{2}} \mathscr{F}^{-1}\lp{ \widetilde{ \phi_0}(\xi_2)(\alpha^2 + 2^{2j}\xi_2^2 )^{-\frac{1}{2}} } {(x_2)}}_{L^2(\mathbb{R}_{x_2})}\\
    &\quad
    ={}
    C
    \n{ \widetilde{\phi_0}(\xi_2)(\alpha^2 + 2^{2j}\xi_2^2 )^{-\frac{1}{2}}  }_{H^1(\mathbb{R}_{\xi_2})}\\
    &\quad
    \leq {}
    C
    \n{ \widetilde{\phi_0}(\xi_2)(\alpha^2 + 2^{2j}\xi_2^2 )^{-\frac{1}{2}}  }_{L^2(\mathbb{R}_{\xi_2})}\\
    &\qquad
    +
    C
    \n{ \partial_{\xi_2}\widetilde{\phi_0}(\xi_2)
    (\alpha^2 + 2^{2j}\xi_2^2 )^{-\frac{1}{2}}  }_{L^2(\mathbb{R}_{\xi_2})}\\
    &\qquad
    +
    C
    \n{ \widetilde{\phi_0}(\xi_2)2^{2j}\xi_2(\alpha^2 + 2^{2j}\xi_2^2 )^{-\frac{3}{2}}  }_{L^2(\mathbb{R}_{\xi_2})}\\
    &\quad 
    \leq{}
    C
    (\alpha^2 + 2^{2j} )^{-\frac{1}{2}}
    +
    C2^{2j}(\alpha^2 + 2^{2j} )^{-\frac{3}{2}}\\
    &\quad 
    \leq{}
    C
    (\alpha^2 + 2^{2j} )^{-\frac{1}{2}},
\end{align}
which completes the proof of the first estimate.

On the other hand, since
\begin{align}
    |\lambda_+(\xi_2)|>|\lambda_-(\xi_2)|
    ={}
    \frac{2\xi_2^2}{\alpha+\sqrt{\alpha^2+4\xi_2^2}}
    \geq{}
    \xi_2^2(\alpha^2+4\xi_2^2)^{-\frac{1}{2}}
\end{align}
for every $\xi_2$, it holds for $|\xi_2|\sim 1$ that
\begin{align}
    |\partial_{\xi_2}(\lambda_{\pm}(2^j\xi_2))|
    ={}
    2(2^j)^2\xi_2(\alpha^2+4(2^j)^2\xi_2^2)^{-\frac{1}{2}}
    \leq{}
    C
    |\lambda_{\pm}(2^j)|.
\end{align}
Therefore, we have
\begin{align}
    &
    \n{ \widetilde{\phi_0}(\xi_2)\lambda_{\pm}(2^j\xi_2)  }_{H^1(\mathbb{R}_{\xi_2})}\\
    &\quad
    ={}
    \sum_{k=0,1}\n{ \partial^k_{\xi_2}\widetilde{\phi_0}(\xi_2)\lambda_{\pm}(2^j\xi_2)  }_{L^2(\mathbb{R}_{\xi_2})}
    +
    \n{ \widetilde{\phi_0}(\xi_2)\partial_{\xi_2}(\lambda_{\pm}(2^j\xi_2))}_{L^2(\mathbb{R}_{\xi_2})}\\
    &\quad 
    \leq{}
    C
    |\lambda_{\pm}(2^j)|.
\end{align}
Moreover, since $\mp\lambda_{\pm}(\xi_2)T=-|\lambda_{\pm}(\xi_2)|T$, we have
\begin{align}
    &
    \n{ \widetilde{\phi_0}(\xi_2)e^{\mp\lambda_{\pm}(2^j\xi_2)T}  }_{H^1(\mathbb{R}_{\xi_2})}\\
    &\quad
    ={}
    \sum_{k=0,1}\n{ \partial^k_{\xi_2}\widetilde{\phi_0}(\xi_2)e^{\mp\lambda_{\pm}(2^j\xi_2)T}}_{L^2(\mathbb{R}_{\xi_2})}
    +
    \n{ \widetilde{\phi_0}(\xi_2)\partial_{\xi_2}(e^{\mp\lambda_{\pm}(2^j\xi_2)T})}_{L^2(\mathbb{R}_{\xi_2})}\\
    &\quad 
    \leq{}
    C
    e^{\mp c \lambda_{\pm}(2^j)T}
    +
    C
    |\lambda_{\pm}(2^j)|Te^{\mp c \lambda_{\pm}(2^j)T}\\
    &\quad
    \leq{}
    C
    e^{\mp c \lambda_{\pm}(2^j)T}.
\end{align}
Hence by a similar method to the first estimate, we also obtain the second and third estimates.
\end{proof}
Using the above lemma via the linear solution formula derived in Section \ref{sec:reform}, we obtain the estimate for the linear solution to \eqref{eq:lin}.
\begin{lemm}\label{lemm:lin-est}
There exists an absolute positive constant $C$
such that 
\begin{align}
    \n{\mathcal{D}[F]}_{S_{p_1,p_2;q}^{\alpha}}
    \leq{}&
    C
    \alpha^{-\frac{1}{p_1}}
    \n{F}_{\dB_{p_3,p_2;q}^{\frac{1}{p_1} + \frac{1}{p_2} + \frac{1}{p_3} - 2}}^{h;\alpha}\\
    &
    +
    C
    \alpha^{\frac{1}{p_1}-\frac{1}{p_3}}
    \n{F}_{\dB_{p_3,p_2;q}^{-\frac{1}{p_1}+\frac{1}{p_2} + \frac{2}{p_3} - 2}}^{\ell;\alpha}\\
    &
    +
    C
    \alpha^{-1+\frac{1}{p_3}-\frac{1}{p_1}}
    \n{F}_{\dB_{p_3,p_2;q}^{\frac{1}{p_1}+\frac{1}{p_2} - 1}}^{\ell;\alpha}
\end{align}
for all 
$\alpha>0$, 
$1 \leq p_1,p_2,q \leq \infty$,
$1 \leq p_3 \leq p_1$,
and
$F$ provided that the right hand side is finite.
\end{lemm}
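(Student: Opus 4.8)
The plan is to reduce the estimate of $\mathcal{D}[F]$ to the four building-block operators $\mathcal{D}^{(0)},\mathcal{D}^{(1)},\widetilde{\mathcal{D}}^{(0)},\widetilde{\mathcal{D}}^{(1)}$ constructed in Section \ref{sec:reform}, to bound each after a Littlewood--Paley localization $\Delta_j$, and then to reassemble the pieces through the hybrid high/low splitting at $2^j\sim\alpha$. Since each operator is an $x_2$-Fourier multiplier composed with convolutions in $x_1$, it commutes with $\Delta_j$, so $\Delta_j\mathcal{D}[F]$ depends only on $\widetilde{\Delta_j}F$; consequently the high- and low-frequency parts of the output are produced by the corresponding parts of the input, and the two regimes may be treated separately.

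First I would record the two core single-block estimates. Using Minkowski's inequality to bring the $L^{p_2}_{x_2}$-norm inside the $x_1$-integrals, applying the third estimate of Lemma \ref{lemm:fre-loc} to the semigroup kernels $e^{\mp\lambda_{\pm}(D_2)(\cdot)}$ and the first estimate to the prefactor $(\alpha^2+4\xi_2^2)^{-1/2}$ (the harmless factor $4$ only affecting constants), one is left with a scalar convolution in $x_1$ against $e^{-c|\lambda_{\pm}(2^j)|t}\mathbf{1}_{t>0}$. Young's inequality in $x_1$ with the exponent $1/r=\rho:=1+\frac{1}{p_1}-\frac{1}{p_3}$, which lies in $[0,1]$ precisely because $1\le p_3\le p_1$, and the elementary computation $\n{e^{-c|\lambda_{\pm}(2^j)|\,\cdot\,}}_{L^r_{x_1}}\sim|\lambda_{\pm}(2^j)|^{-\rho}$ then yield
\begin{align}
\n{\Delta_j\mathcal{D}^{(0)}[g]}_{L^{p_1}_{x_1}L^{p_2}_{x_2}}
\lesssim
\frac{|\lambda_-(2^j)|^{-\rho}+|\lambda_+(2^j)|^{-\rho}}{\sqrt{\alpha^2+2^{2j}}}
\n{\Delta_j g}_{L^{p_3}_{x_1}L^{p_2}_{x_2}},
\end{align}
together with the analogous bound for $\mathcal{D}^{(1)}$ in which $-\rho$ is replaced by $1-\rho$ (the extra factor $\lambda_{\pm}(D_2)$ being absorbed through the second estimate of Lemma \ref{lemm:fre-loc}). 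For the tilded operators I would write $\widetilde{\mathcal{D}}^{(m)}=\mathcal{D}^{(m)}(-\Delta)^{-1}$ and invoke the localized resolvent bound $\n{\Delta_j(-\Delta)^{-1}g}_{L^{p_3}_{x_1}L^{p_2}_{x_2}}\lesssim 2^{-2j}\n{\Delta_j g}_{L^{p_3}_{x_1}L^{p_2}_{x_2}}$, obtained from the fourth estimate of Lemma \ref{lemm:fre-loc} and Young's inequality with an $L^1_{x_1}$-kernel; this simply multiplies the previous factors by $2^{-2j}$.

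Next I would evaluate these multiplier factors in the two regimes via the asymptotics $|\lambda_{\pm}(2^j)|\sim 2^j$ for $2^j>\alpha$, and $|\lambda_+(2^j)|\sim\alpha$, $|\lambda_-(2^j)|\sim 2^{2j}/\alpha$ for $2^j\le\alpha$. Incorporating the $\partial_{x_2}$-derivative weights carried by the terms of $\mathcal{D}_1[F],\mathcal{D}_2[F]$ (namely $2^j,2^{3j},2^{2j}$ and $1$), a short computation shows that in the high-frequency regime every one of the four blocks contributes the single factor $2^{-\rho j}$, which is exactly the gap $2^{(1/p_1+1/p_2+1/p_3-2)j}/2^{(2/p_1+1/p_2-1)j}$ between the source and target regularities; this produces the first term on the right-hand side, the weight $\alpha^{-1/p_1}$ cancelling between input and output. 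In the low-frequency regime the $\mathcal{D}^{(0)},\widetilde{\mathcal{D}}^{(0)}$ terms yield the common factor $\alpha^{-1+\rho}2^{(1-2\rho)j}$ and the $\mathcal{D}^{(1)},\widetilde{\mathcal{D}}^{(1)}$ terms the $j$-independent factor $\alpha^{-\rho}$; matching the powers of $2^j$ and $\alpha$ reproduces the second term (regularity $-\frac{1}{p_1}+\frac{1}{p_2}+\frac{2}{p_3}-2$, weight $\alpha^{1/p_1-1/p_3}$) and the third term (regularity $\frac{1}{p_1}+\frac{1}{p_2}-1$, weight $\alpha^{-1+1/p_3-1/p_1}$), respectively. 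Multiplying by the target $2^{sj}$-weights, taking $\ell^q(\mathbb{Z})$-norms over each dyadic range, and summing the finitely many components then closes the estimate.

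The main obstacle I anticipate is the bookkeeping for the tilded operators $\widetilde{\mathcal{D}}^{(0)},\widetilde{\mathcal{D}}^{(1)}$: they involve the composition of two $x_1$-convolutions — the Oseen semigroup kernel and the one-dimensional Helmholtz resolvent with kernel $e^{-|\xi_2||\cdot|}/(2|\xi_2|)$ — whose decay rates $|\lambda_{\pm}(2^j)|$ and $2^j$ cross over at $2^j\sim\alpha$. One must route the single $L^{p_3}_{x_1}\to L^{p_1}_{x_1}$ Young gain through the semigroup factor while keeping the resolvent at the $L^1_{x_1}$ level, and then verify that the resulting powers of $\alpha$ and $2^j$ align with the three stated norms in both regimes. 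Checking that $\rho\in[0,1]$ under the hypothesis $p_3\le p_1$, so that Young's inequality is applicable, is the small but essential structural point on which the whole scheme rests.
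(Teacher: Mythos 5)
Your proposal is correct and follows essentially the same route as the paper: frequency-localize, apply Lemma \ref{lemm:fre-loc} together with Young's inequality in $x_1$ at the exponent $1/r=1+\tfrac{1}{p_1}-\tfrac{1}{p_3}$, and split at $2^j\sim\alpha$ using $|\lambda_{\pm}(2^j)|\sim 2^j$ in the high-frequency regime and $\lambda_+\sim\alpha$, $|\lambda_-|\sim 2^{2j}/\alpha$ in the low-frequency regime. The only (harmless) bookkeeping caveat is that the low-frequency contribution of $\widetilde{\mathcal{D}}^{(1)}[\partial_{x_2}^2g]$ actually comes out as $\alpha^{-1+\rho}2^{(1-2\rho)j}$ with your $\rho=1+\tfrac{1}{p_1}-\tfrac{1}{p_3}$ --- the same factor as $\mathcal{D}^{(0)}$ and $\widetilde{\mathcal{D}}^{(0)}$, hence landing in the second term of the stated estimate rather than the $j$-independent third term (which it need not obey when $\rho>1/2$) --- but since both terms appear on the right-hand side the conclusion is unaffected, and the paper's own write-up performs the same lumping.
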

\begin{proof}
    For the estimate of $\mathcal{D}^{(0)}[\partial_{x_2}g]$, we see by Lemma \ref{lemm:fre-loc} that 
    \begin{align}
    \n{\Delta_j \mathcal{D}^{(0)}[\partial_{x_2}g](x_1)}_{L^{p_2}_{x_2}}
    \leq{}&
    \frac{C2^j}{\sqrt{\alpha^2 + 2^{2j}}}
    \int_{-\infty}^{x_1}e^{{c}\lambda_{-}(2^j)(x_1-y_1)}\n{\Delta_j g(y_1,\cdot)}_{L^{p_2}}dy_1\\
    &
    +
    \frac{C2^j}{\sqrt{\alpha^2 + 2^{2j}}}
    \int_{x_1}^{\infty}
    e^{{c}\lambda_+(2^j)(x_1-y_1)}
    \n{\Delta_j g(y_1,\cdot)}_{L^{p_2}}dy_1.
    \end{align}
    It follows from the Hausdorff--Young inequality that  
    \begin{align}
    \n{\Delta_j \mathcal{D}^{(0)}[\partial_{x_2}g]}_{L^{p_1}_{x_1}L^{p_2}_{x_2}}
    \leq{}&
    \frac{C2^j}{\sqrt{\alpha^2 + 2^{2j}}}
    \sp{-\lambda_-(2^j)}^{-1+\frac{1}{p_3}-\frac{1}{p_1}}
    \n{\Delta_j g}_{L^{p_3}_{x_1}L^{p_2}_{x_2}}\\
    &
    +
    \frac{C2^j}{\sqrt{\alpha^2 + 2^{2j}}}
    \sp{\lambda_+(2^j)}^{-1+\frac{1}{p_3}-\frac{1}{p_1}}
    \n{\Delta_j g}_{L^{p_3}_{x_1}L^{p_2}_{x_2}}.
    \end{align}
    Using two estimates
    \begin{align}
        &
        \frac{1}{\sqrt{\alpha^2 + 2^{2j}}}
        \leq
        \begin{cases}
        2^{-j} & (2^j > \alpha),\\
        \alpha^{-1} & (2^j \leq \alpha),
        \end{cases}\\
        &
        \lambda_{+}(2^j)
        \geq 
        -\lambda_{-}(2^j)
        =
        \frac{2\xi_2^2}{\alpha+\sqrt{\alpha^2+4\xi_2^2}}
        \geq
        \begin{cases}
        c2^j & (2^j > \alpha),\\
        c\alpha^{-1}2^{2j} & (2^j \leq \alpha),
        \end{cases}
    \end{align}
    we have
    \begin{align}
    \n{\Delta_j \mathcal{D}^{(0)}[\partial_{x_2}g]}_{L^{p_1}_{x_1}L^{p_2}_{x_2}}
    \leq{}&
    \begin{dcases}
    C
    2^{(-1+\frac{1}{p_3}-\frac{1}{p_1})j}\n{\Delta_j g}_{L^{p_3}_{x_1}L^{p_2}_{x_2}} & 2^j > \alpha,\\
    C
    \alpha^{-\frac{1}{p_3}+\frac{1}{p_1}}2^{(-1+\frac{2}{p_3}-\frac{2}{p_1})j}\n{\Delta_j g}_{L^{p_3}_{x_1}L^{p_2}_{x_2}} & 2^j \leq \alpha.
    \end{dcases}
    \end{align}

    For the estimate of $\mathcal{D}^{(1)}[g]$, we see by Lemma \ref{lemm:fre-loc} that 
    \begin{align}
    \n{\Delta_j \mathcal{D}^{(1)}[g](x_1)}_{L^{p_2}_{x_2}}
    \leq{}&
    \frac{C|\lambda_-(2^j)|}{\sqrt{\alpha^2 + 2^{2j}}}
    \int_{-\infty}^{x_1}e^{{c}\lambda_{-}(2^j)(x_1-y_1)}\n{\Delta_j g(y_1,\cdot)}_{L^{p_2}}dy_1\\
    &
    +
    \frac{C\lambda_+(2^j)}{\sqrt{\alpha^2 + 2^{2j}}}
    \int_{x_1}^{\infty}
    e^{{c}\lambda_+(2^j)(x_1-y_1)}
    \n{\Delta_j g(y_1,\cdot)}_{L^{p_2}}dy_1.
    \end{align}
    It follows from the Hausdorff--Young inequality that  
    \begin{align}
    \n{\Delta_j \mathcal{D}^{(1)}[g]}_{L^{p_1}_{x_1}L^{p_2}_{x_2}}
    \leq{}&
    \frac{C}{\sqrt{\alpha^2 + 2^{2j}}}
    \sp{-\lambda_-(2^j)}^{\frac{1}{p_3}-\frac{1}{p_1}}
    \n{\Delta_j g}_{L^{p_3}_{x_1}L^{p_2}_{x_2}}\\
    &
    +
    \frac{C}{\sqrt{\alpha^2 + 2^{2j}}}
    \sp{\lambda_+(2^j)}^{\frac{1}{p_3}-\frac{1}{p_1}}
    \n{\Delta_j g}_{L^{p_3}_{x_1}L^{p_2}_{x_2}}
    \end{align}
    Using two estimates
    \begin{align}
        &
        \frac{1}{\sqrt{\alpha^2 + 2^{2j}}}
        \leq
        \begin{cases}
        2^{-j} & (2^j > \alpha),\\
        \alpha^{-1} & (2^j \leq \alpha),
        \end{cases}\\
        &
        -\lambda_{-}(2^j)
        \leq 
        \lambda_{+}(2^j)
        =
        \frac{\alpha+\sqrt{\alpha^2+{2^{2j+2}}}}{2}
        \leq
        \begin{cases}
        C2^j & (2^j > \alpha),\\
        C\alpha & (2^j \leq \alpha),
        \end{cases}
    \end{align}
    we have
    \begin{align}
    \n{\Delta_j \mathcal{D}^{(1)}[g]}_{L^{p_1}_{x_1}L^{p_2}_{x_2}}
    \leq{}&
    \begin{dcases}
    C2^{(-1+\frac{1}{p_3}-\frac{1}{p_1})j}\n{\Delta_j g}_{L^{p_3}_{x_1}L^{p_2}_{x_2}} & 2^j > \alpha,\\
    C
    \alpha^{-1+\frac{1}{p_3}-\frac{1}{p_1}}
    \n{\Delta_j g}_{L^{p_3}_{x_1}L^{p_2}_{x_2}} & 2^j \leq \alpha.
    \end{dcases}
    \end{align}

    For the estimate of ${\Delta_j \widetilde{\mathcal{D}}^{(0)}[\partial_{x_2}^3g]}$ and $\widetilde{\mathcal{D}}^{(1)}[\partial_{x_2}^2g]$, we see by Lemma \ref{lemm:fre-loc} that 
    \begin{align}
    &
    \n{\Delta_j \widetilde{\mathcal{D}}^{(0)}[\partial_{x_2}^3g](x_1)}_{L^{p_2}_{x_2}}
    +
    \n{\Delta_j \widetilde{\mathcal{D}}^{(1)}[\partial_{x_2}^2g](x_1)}_{L^{p_2}_{x_2}}\\
    &\quad
    \leq{}
    \frac{C2^{2j}}{\sqrt{\alpha^2 + 2^{2j}}}
    \int_{-\infty}^{x_1}e^{{c}\lambda_{-}(2^j)(x_1-y_1)}
    \int_{\mathbb{R}}
    e^{-c2^j|y_1-z_1|}
    \n{\Delta_j g(z_1,\cdot)}_{L^{p_2}}
    dz_1
    dy_1\\
    &\qquad
    +
    \frac{C2^{2j}}{\sqrt{\alpha^2 + 2^{2j}}}
    \int_{x_1}^{\infty}e^{{c}\lambda_{+}(2^j)(x_1-y_1)}
    \int_{\mathbb{R}}
    e^{-c2^j|y_1-z_1|}
    \n{\Delta_j g(z_1,\cdot)}_{L^{p_2}}
    dz_1
    dy_1.
    \end{align}
    Using the Hausdorff--Young inequality twice, we have 
    \begin{align}
    &
    \n{\Delta_j \widetilde{\mathcal{D}}^{(0)}[\partial_{x_2}^3g]}_{L^{p_1}_{x_1}L^{p_2}_{x_2}}
    +
    \n{\Delta_j \widetilde{\mathcal{D}}^{(1)}[\partial_{x_2}^2g]}_{L^{p_1}_{x_1}L^{p_2}_{x_2}}\\
    &\quad 
    \leq{}
    \frac{C2^{2j}}{\sqrt{\alpha^2 + 2^{2j}}}
    \sp{-\lambda_-(2^j)}^{-1+\frac{1}{p_3}-\frac{1}{p_1}}
    { 2^{-j}}
    \n{\Delta_j g}_{L^{p_3}_{x_1}L^{p_2}_{x_2}}\\
    &
    \qquad
    +
    \frac{C2^{2j}}{\sqrt{\alpha^2 + 2^{2j}}}
    \sp{\lambda_+(2^j)}^{-1+\frac{1}{p_3}-\frac{1}{p_1}}
    { 2^{-j}}
    \n{\Delta_j g}_{L^{p_3}_{x_1}L^{p_2}_{x_2}}\\
    &\quad 
    \leq{}
    \begin{dcases}
    C2^{(-1+\frac{1}{p_3}-\frac{1}{p_1})j}\n{\Delta_j g}_{L^{p_3}_{x_1}L^{p_2}_{x_2}} & 2^j > \alpha,\\
    C
    \alpha^{-1+\frac{1}{p_3}-\frac{1}{p_1}}
    \n{\Delta_j g}_{L^{p_3}_{x_1}L^{p_2}_{x_2}} & 2^j \leq \alpha,
    \end{dcases}
    \end{align}
    which completes the proof.
\end{proof}
\begin{rem}
    Let $2 \leq p_1 \leq \infty$ and $p_3:= p_1/2$ in Lemma \ref{lemm:lin-est}.
    Then, since we see by $2/p_1 - 1 \leq 0$ that
    \begin{align}
    &
    \alpha^{-\frac{1}{p_1}}
    \n{F}_{\dB_{p_3,p_2;q}^{\frac{1}{p_1} + \frac{1}{p_2} + \frac{1}{p_3} - 2}}^{h;\alpha}
    =
    \alpha^{-\frac{1}{p_1}}
    \n{F}_{\dB_{\frac{p_1}{2},p_2;q}^{\frac{3}{p_1} + \frac{1}{p_2} - 2}}^{h;\alpha}
    \\
    &
    \alpha^{\frac{1}{p_1}-\frac{1}{p_3}}
    \n{F}_{\dB_{p_3,p_2;q}^{-\frac{1}{p_1}+\frac{1}{p_2} + \frac{2}{p_3} - 2}}^{\ell;\alpha}
    =
    \alpha^{-\frac{1}{p_1}}
    \n{F}_{\dB_{\frac{p_1}{2},p_2;q}^{\frac{3}{p_1}+\frac{1}{p_2} - 2}}^{\ell;\alpha}
    \\
    &
    \begin{aligned}
    \alpha^{-1+\frac{1}{p_3}-\frac{1}{p_1}}
    \n{F}_{\dB_{p_3,p_2;q}^{\frac{1}{p_1}+\frac{1}{p_2} - 1}}^{\ell;\alpha}
    ={}
    \alpha^{-1+\frac{1}{p_1}}
    \n{F}_{\dB_{\frac{p_1}{2},p_2;q}^{\frac{1}{p_1}+\frac{1}{p_2} - 1}}^{\ell;\alpha}
    \leq{}
    \alpha^{-\frac{1}{p_1}}
    \n{F}_{\dB_{\frac{p_1}{2},p_2;q}^{\frac{3}{p_1}+\frac{1}{p_2} - 2}}^{\ell;\alpha},
    \end{aligned}
    \end{align}
    it holds
    \begin{align}
    \n{\mathcal{D}[F]}_{S_{p_1,p_2;q}^{\alpha}}
    \leq{}&
    C
    \alpha^{-\frac{1}{p_1}}
    \n{F}_{\dB_{\frac{p_1}{2},p_2;q}^{\frac{3}{p_1} + \frac{1}{p_2} - 2}}.
\end{align}
\end{rem}
Next, we provide lemmas for nonlinear estimates.
\begin{lemm}\label{lemm:nonlin-est}
Let $\alpha>0$, and let $p_1,p_2,q$ satisfy
\begin{align}
    \max\Mp{\frac{1}{3}, \frac{2}{3}\sp{1-\frac{1}{p_2}}}
    <
    \frac{1}{p_1}
    \leq 
    \frac{1}{2},\qquad
    1 \leq p_2 < 4,\qquad
    1 \leq q \leq \infty.
\end{align}
Then, there exists a positive constant $C=C(p_1,p_2,q)$ such that
\begin{align}
    \alpha^{-\frac{1}{p_1}}
    \n{fg}_{\dB_{\frac{p_1}{2},p_2;q}^{\frac{3}{p_1} + \frac{1}{p_2}-2}}
    \leq {}&
    C
    \n{f}_{S_{p_1,p_2;q}^{\alpha}}
    \n{g}_{S_{p_1,p_2;q}^{\alpha}}
\end{align}
for all $f,g \in S_{p_1,p_2;q}^{\alpha}(\mathbb{R}^2)$.
\end{lemm}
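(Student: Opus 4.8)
The plan is to bound the full Besov norm on the left directly, since this is exactly the norm in which the datum must be measured for the fixed--point argument (cf.\ the Remark after Lemma~\ref{lemm:lin-est} with $p_3=p_1/2$). Because $f$ and $g$ carry the higher regularity $\frac{2}{p_1}+\frac1{p_2}-1$ on $\{2^j>\alpha\}$ and the lower regularity $\frac1{p_1}+\frac1{p_2}-1$ on $\{2^j\leq\alpha\}$, I would first split each factor as $f=f^h+f^\ell$ and $g=g^h+g^\ell$, with $f^h:=\sum_{2^j>\alpha}\Delta_jf$ and $f^\ell:=\sum_{2^j\leq\alpha}\Delta_jf$, and split the target $\ell^q$--sum into its high-- and low--output parts as well, reducing the claim to the finitely many resulting pieces (three Bony terms, each in four high/low combinations). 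On each piece I would run Bony's decomposition \emph{in the $x_2$ variable only},
\begin{align}
    fg
    =
    \sum_{j\in\mathbb Z}S_{j-1}f\,\Delta_jg
    +
    \sum_{j\in\mathbb Z}S_{j-1}g\,\Delta_jf
    +
    \sum_{j\in\mathbb Z}\Delta_jf\,\widetilde{\Delta_j}g,
    \qquad
    S_{j-1}:=\sum_{k\leq j-2}\Delta_k,
\end{align}
so that the spectral support conditions in $\xi_2$ are the standard ones and the output frequency of each paraproduct term is comparable to that of its modulated factor.

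The two workhorses are Hölder's inequality in $x_1$, which sends $L^{p_1}_{x_1}\times L^{p_1}_{x_1}$ into $L^{p_1/2}_{x_1}$ (this is where $\frac1{p_1}\leq\frac12$, i.e.\ $p_1/2\geq1$, is used), and the one--dimensional Bernstein inequality in $x_2$, applied uniformly in $x_1$, in two guises: downward, $\n{\Delta_kh}_{L^\infty_{x_2}}\leq C2^{k/p_2}\n{\Delta_kh}_{L^{p_2}_{x_2}}$, to place the low--frequency factor of a paraproduct into $L^\infty_{x_2}$; and, for the remainder, an \emph{output}--frequency gain obtained by combining Hölder in $x_2$ with Young's convolution inequality, namely $\n{\Delta_m(\Delta_jf\,\widetilde{\Delta_j}g)}_{L^{p_2}_{x_2}}\leq C2^{m/p_2}\n{\Delta_jf}_{L^{p_2}_{x_2}}\n{\Delta_jg}_{L^{p_2}_{x_2}}$ for $m\leq j$. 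The heart of the proof is a bookkeeping of powers: in every term the net dyadic exponent produced by these steps equals $\pm\frac1{p_1}$, so that restricting to $\{2^m>\alpha\}$ turns the excess decay $2^{-m/p_1}\leq\alpha^{-1/p_1}$ into the prefactor $\alpha^{-1/p_1}$, while on $\{2^m\leq\alpha\}$ a term with positive exponent saturates at $2^m\sim\alpha$ and yields $\alpha^{+1/p_1}$, which is precisely what compensates the absence of the weight on the low--frequency inputs. The summations in $j$ and $m$ are then carried out by Young's inequality for discrete convolutions in $\ell^q$.

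The main obstacle is the remainder term, and especially the low--frequency outputs it generates from high--frequency inputs. Controlling it requires $\frac{3}{p_1}+\frac2{p_2}-2>0$, so that the low--output $\ell^q$--sum, which carries the weight $2^{(3/p_1+2/p_2-2)m}$, converges and saturates at the threshold $2^m\sim\alpha$; this is exactly the hypothesis $\frac23\sp{1-\frac1{p_2}}<\frac1{p_1}$, which a fortiori gives the positive high--frequency regularity $\frac{2}{p_1}+\frac1{p_2}-1>0$ needed for the inner dyadic sum to be dominated by its top end, and which forces $p_2<4$. A secondary technical point is that the output--frequency gain above rests on Young's inequality with exponent $p_2/2$, valid directly when $p_2\geq2$; for $1\leq p_2<2$ I would instead route through $L^1_{x_2}$ by a two--step Bernstein estimate, arriving at the same bound. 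Once each case is reduced to $\alpha^{-1/p_1}$ times a convergent $\ell^q$ convolution of the sequences defining $\n{f}_{S_{p_1,p_2;q}^{\alpha}}$ and $\n{g}_{S_{p_1,p_2;q}^{\alpha}}$, summing the contributions gives the stated estimate; the remaining hypotheses $\frac1{p_1}>\frac13$ and $1\leq q\leq\infty$ serve only to keep the borderline exponents and the discrete convolutions under control.
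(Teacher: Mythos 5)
Your proposal is correct and follows essentially the same route as the paper: Bony's decomposition in the $x_2$ variable only, H\"older in $x_1$ from $L^{p_1}\times L^{p_1}$ into $L^{p_1/2}$, Bernstein in $x_2$ for the low-frequency paraproduct factor, and the same case split $p_2\geq 2$ versus $1\leq p_2<2$ (routing through $L^1_{x_2}$) for the remainder, with the hypotheses $3/p_1-1>0$ and $3/p_1+2/p_2-2>0$ playing exactly the roles you identify. The only difference is organizational: instead of splitting $f$, $g$, and the output into high/low parts relative to $\alpha$ and tracking all the resulting pieces, the paper observes that $\n{f}_{\dB_{p_1,p_2;q}^{\frac{1}{p_1}+\frac{1}{p_2}-1}}\leq\n{f}_{S_{p_1,p_2;q}^{\alpha}}$ and $\alpha^{-\frac{1}{p_1}}\n{g}_{\dB_{p_1,p_2;q}^{\frac{2}{p_1}+\frac{1}{p_2}-1}}\leq\n{g}_{S_{p_1,p_2;q}^{\alpha}}$ hold for the \emph{full} (unsplit) Besov norms, which collapses your case analysis into a single estimate per Bony term.
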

\begin{proof}
By the Bony paraproduct decomposition, it holds
\begin{align}
    fg = T_fg +R(f,g) + T_gf,
\end{align}
where
\begin{align}
     T_fg := 
     \sum_{k \in \mathbb{Z}}
     \sp{
     \sum_{\ell \leq k-3}
     \Delta_{\ell} f
     }
     \Delta_kg,
     \qquad
     R(f,g)
     :=
     \sum_{|k-\ell|\leq 2}
     \Delta_kf\Delta_{\ell}g.
\end{align}
It follows from 
\begin{align}
    \Delta_j 
    T_fg 
    = 
    \Delta_j
    \sum_{|m|\leq 3}
    \sp{
    \sum_{\ell \leq j+m-3}
    \Delta_{\ell} f
    }
    \Delta_{j+m}g
\end{align}
that
\begin{align}
    \n{\Delta_j T_fg}_{L^{\frac{p_1}{2}}_{x_1}L^{p_2}_{x_2}}
    \leq{}&
    C
    \sum_{|m|\leq 3}
    \sum_{\ell \leq j+m-3}
    \n{\Delta_{\ell} f}_{L^{p_1}_{x_1}L^{\infty}_{x_2}}
    \n{\Delta_{j+m}g}_{L^{p_1}_{x_1}L^{p_2}_{x_2}}\\
    \leq{}&
    C
    2^{(1-\frac{1}{p_1})j}
    \n{f}_{\dB_{p_1,\infty;q}^{\frac{1}{p_1}-1}}
    \sum_{|m|\leq 3}
    \n{\Delta_{j+m}g}_{L^{p_1}_{x_1}L^{p_2}_{x_2}}\\
    \leq{}&
    C
    2^{(1-\frac{1}{p_1})j}
    \n{f}_{\dB_{p_1,p_2;q}^{\frac{1}{p_1} + \frac{1}{p_2}-1}}
    \sum_{|m|\leq 3}
    \n{\Delta_{j+m}g}_{L^{p_1}_{x_1}L^{p_2}_{x_2}}.
\end{align}
Multiplying this by $\alpha^{-\frac{1}{p_1}}2^{(\frac{3}{p_1}+\frac{1}{p_2}-2)j}$ and taking $\ell^{q}(\mathbb{Z})$-norm, we have
\begin{align}
    \alpha^{-\frac{1}{p_1}}
    \n{T_fg}_{\dB_{\frac{p_1}{2},p_2;q}^{\frac{3}{p_1} + \frac{1}{p_2}-2}}
    \leq 
    C
    \alpha^{-\frac{1}{p_1}}
    \n{f}_{\dB_{p_1,p_2;q}^{\frac{1}{p_1} + \frac{1}{p_2}-1}}
    \n{g}_{\dB_{p_1,p_2;q}^{\frac{2}{p_1} + \frac{1}{p_2}-1}}.
\end{align}
Using 
\begin{align}
    \n{f}_{\dB_{p_1,p_2;q}^{\frac{1}{p_1} + \frac{1}{p_2}-1}}
    \leq 
    \n{f}_{S_{p_1,p_2;q}^{\alpha}},\qquad
    \alpha^{-\frac{1}{p_1}}
    \n{g}_{\dB_{p_1,p_2;q}^{\frac{2}{p_1} + \frac{1}{p_2}-1}}
    \leq{}
    \n{g}_{S_{p_1,p_2;q}^{\alpha}},
\end{align}
we have 
\begin{align}
    \alpha^{-\frac{1}{p_1}}
    \n{T_fg}_{\dB_{\frac{p_1}{2},p_2;q}^{\frac{3}{p_1} + \frac{1}{p_2}-2}}
    \leq 
    C
    \n{f}_{S_{p_1,p_2;q}^{\alpha}}
    \n{g}_{S_{p_1,p_2;q}^{\alpha}}.
\end{align}
Similarly, it holds
\begin{align}
    \alpha^{-\frac{1}{p_1}}
    \n{T_gf}_{\dB_{\frac{p_1}{2},p_2;q}^{\frac{2}{p_1} + \frac{1}{p_2}-2}}
    \leq 
    C
    \n{f}_{S_{p_1,p_2;q}^{\alpha}}
    \n{g}_{S_{p_1,p_2;q}^{\alpha}}.
\end{align}
For the estimate of $R(f,g)$, we first consider the case $1 \leq p_2 \leq 2$.
By the Bernstein inequality, we see that
\begin{align}
    \alpha^{-\frac{1}{p_1}}
    \n{R(f,g)}_{\dB_{\frac{p_1}{2},p_2;q}^{\frac{3}{p_1} + \frac{1}{p_2}-2}}
    \leq
    C
    \alpha^{-\frac{1}{p_1}}
    \n{R(f,g)}_{\dB_{\frac{p_1}{2},1;q}^{\frac{3}{p_1}-1}}.
\end{align}
Using 
\begin{align}
    \Delta_jR(f,g)
    =
    \Delta_j
    \sum_{{k \geq j-4}}
    \Delta_kf
    \sum_{|k-\ell|\leq 2}
    \Delta_{\ell}g,
\end{align}
we have 
\begin{align}
    &
    2^{(\frac{3}{p_1}-1)j}
    \n{\Delta_jR(f,g)}_{L^{\frac{p_1}{2}}_{x_1}L^{1}_{x_2}}\\
    &\quad 
    \leq{}
    C
    \sum_{{k \geq j-4}}
    2^{(\frac{3}{p_1}-1)(j-k)}
    2^{(\frac{3}{p_1}-1)k}
    \n{\Delta_kf}_{L^{p_1}_{x_1}L^{p_2}_{x_2}}
    \sum_{|k-\ell|\leq 2}
    \n{\Delta_{\ell}g}_{L^{p_1}_{x_1}L^{p_2'}_{x_2}}\\
    &\quad
    \leq{}
    C
    \sum_{{k \geq j-4}}
    2^{(\frac{3}{p_1}-1)(j-k)}
    2^{(\frac{1}{p_1}+\frac{1}{p_2}-1)k}
    \n{\Delta_kf}_{L^{p_1}_{x_1}L^{p_2}_{x_2}}
    \sum_{|k-\ell|\leq 2}
    2^{(\frac{2}{p_1}+\frac{1}{p_2}-1)\ell}
    \n{\Delta_{\ell}g}_{L^{p_1}_{x_1}L^{p_2}_{x_2}}.
\end{align}
It holds by the Hausdorff--Young inequality via $3/p_1-1 >0$ that
\begin{align}
    &
    \alpha^{-\frac{1}{p_1}}
    \n{R(f,g)}_{\dB_{\frac{p_1}{2},1;q}^{\frac{3}{p_1}-1}}\\
    &\quad 
    \leq
    C
    \alpha^{-\frac{1}{p_1}}
    \n{
    \Mp{
    2^{(\frac{1}{p_1}+\frac{1}{p_2}-1)k}
    \n{\Delta_kf}_{L^{p_1}_{x_1}L^{p_2}_{x_2}}
    \sum_{|k-\ell|\leq 2}
    2^{(\frac{2}{p_1}+\frac{1}{p_2}-1)\ell}
    \n{\Delta_{\ell}g}_{L^{p_1}_{x_1}L^{p_2}_{x_2}}
    }_{k \in \mathbb{Z}}
    }_{\ell^q(\mathbb{Z})}\\
    &\quad 
    \leq 
    C
    \alpha^{-\frac{1}{p_1}}
    \n{f}_{\dB_{p_1,p_2;q}^{\frac{1}{p_1} + \frac{1}{p_2}-1}}
    \n{g}_{\dB_{p_1,p_2;q}^{\frac{2}{p_1} + \frac{1}{p_2}-1}}\\
    &\quad 
    \leq{}
    C
    \n{f}_{S_{p_1,p_2;q}^{\alpha}}
    \n{g}_{S_{p_1,p_2;q}^{\alpha}}.
\end{align}
For the case of $p_2 \geq 2$, it holds by {a} similarly argument as above $3/p_1 + 2/p_2 - 2 > 0$ that 
\begin{align}
    \alpha^{-\frac{1}{p_1}}
    \n{R(f,g)}_{\dB_{\frac{p_1}{2},p_2;q}^{\frac{3}{p_1} + \frac{1}{p_2}-2}}
    \leq{}&
    C
    \alpha^{-\frac{1}{p_1}}
    \n{R(f,g)}_{\dB_{\frac{p_1}{2},\frac{p_2}{2};q}^{\frac{3}{p_1}+\frac{2}{p_2}-2}}\\
    \leq{}&
    C
    \alpha^{-\frac{1}{p_1}}
    \n{f}_{\dB_{p_1,p_2;q}^{\frac{1}{p_1} + \frac{1}{p_2}-1}}
    \n{g}_{\dB_{p_1,p_2;q}^{\frac{2}{p_1} + \frac{1}{p_2}-1}}\\
    \leq{}&
    C
    \n{f}_{S_{p_1,p_2;q}^{\alpha}}
    \n{g}_{S_{p_1,p_2;q}^{\alpha}}.
\end{align}
Thus, we complete the proof.
\end{proof}
As a corollary of Lemmas \ref{lemm:lin-est} and \ref{lemm:nonlin-est},
we have the followings.
\begin{cor}\label{cor:lin-nonlin}
Let $\alpha>0$, and let $p_1,p_2,q$ satisfy
\begin{align}
    \max\Mp{\frac{1}{3}, \frac{2}{3}\sp{1-\frac{1}{p_2}}}
    <
    \frac{1}{p_1}
    \leq 
    \frac{1}{2},\qquad
    1 \leq p_2 < 4,\qquad
    1 \leq q \leq \infty.
\end{align}
Then, there exists a positive constant $C_0=C_0(p_1,p_2,q)$ such that
\begin{align}
    \n{\mathcal{D}[F]}_{S_{p_1,p_2;q}^{\alpha}}
    \leq{}&
    C_0
    \n{F}_{D_{p_1,p_2;q}^{\alpha}},\\
    \n{\mathcal{D}[u \otimes v]}_{S_{p_1,p_2;q}^{\alpha}}
    \leq{}&
    C_0
    \n{u}_{S_{p_1,p_2;q}^{\alpha}}
    \n{v}_{S_{p_1,p_2;q}^{\alpha}}
\end{align}
for all $F \in D_{p_1,p_2;q}^{\alpha}(\mathbb{R}^2)$ and $u,v \in S_{p_1,p_2;q}^{\alpha}(\mathbb{R}^2)$.
\end{cor}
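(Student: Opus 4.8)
The plan is to obtain both bounds as direct consequences of the linear estimate in Lemma \ref{lemm:lin-est} and the bilinear estimate in Lemma \ref{lemm:nonlin-est}, the only real work being a judicious choice of the auxiliary index $p_3$ in each case together with a check that the resulting exponents reassemble into the norms defining $D_{p_1,p_2;q}^{\alpha}$ and $S_{p_1,p_2;q}^{\alpha}$.

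For the first inequality I would apply Lemma \ref{lemm:lin-est} with $p_3 = p_1$, which is admissible since $1 \leq p_1 \leq p_1$. With this choice the three terms on the right-hand side become
\begin{align}
    C\alpha^{-\frac{1}{p_1}}\n{F}_{\dB_{p_1,p_2;q}^{\frac{2}{p_1}+\frac{1}{p_2}-2}}^{h;\alpha},
    \qquad
    C\n{F}_{\dB_{p_1,p_2;q}^{\frac{1}{p_1}+\frac{1}{p_2}-2}}^{\ell;\alpha},
    \qquad
    C\alpha^{-1}\n{F}_{\dB_{p_1,p_2;q}^{\frac{1}{p_1}+\frac{1}{p_2}-1}}^{\ell;\alpha}.
\end{align}
The first two are exactly the two summands of $\n{F}_{D_{p_1,p_2;q}^{\alpha}}$. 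For the third, I would exploit the frequency restriction $2^j \leq \alpha$ built into the low-frequency hybrid norm: writing $2^{(\frac{1}{p_1}+\frac{1}{p_2}-1)j} = 2^{j}\,2^{(\frac{1}{p_1}+\frac{1}{p_2}-2)j}$ and using $\alpha^{-1}2^j \leq 1$ on the summation range, the third term is dominated by $C\n{F}_{\dB_{p_1,p_2;q}^{\frac{1}{p_1}+\frac{1}{p_2}-2}}^{\ell;\alpha}$, i.e. again by the low-frequency summand of the data norm. Collecting the three contributions yields the first estimate.

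For the second inequality I would start from the simplified linear bound recorded in the Remark following Lemma \ref{lemm:lin-est}. Since the standing hypothesis $\frac{1}{p_1} \leq \frac{1}{2}$ forces $p_1 \geq 2$, the choice $p_3 = p_1/2 \geq 1$ is admissible, and that Remark gives
\begin{align}
    \n{\mathcal{D}[u \otimes v]}_{S_{p_1,p_2;q}^{\alpha}}
    \leq
    C\alpha^{-\frac{1}{p_1}}
    \n{u \otimes v}_{\dB_{\frac{p_1}{2},p_2;q}^{\frac{3}{p_1}+\frac{1}{p_2}-2}}.
\end{align}
Applying Lemma \ref{lemm:nonlin-est} to each scalar component $u_iv_j$ of the tensor $u \otimes v$ and summing over the finitely many entries then controls the right-hand side by $C\n{u}_{S_{p_1,p_2;q}^{\alpha}}\n{v}_{S_{p_1,p_2;q}^{\alpha}}$, which is the claim.

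Since this is a formal corollary, I do not expect a genuine obstacle; the step demanding the most care is the exponent bookkeeping in the first estimate, where the collapse of the three terms of Lemma \ref{lemm:lin-est} onto the two summands of $\n{F}_{D_{p_1,p_2;q}^{\alpha}}$ hinges on using $2^j \leq \alpha$ to absorb the spurious $\alpha^{-1}$ factor in the low-frequency term. Verifying the admissibility $p_3 = p_1/2 \geq 1$ through $p_1 \geq 2$ is the one hypothesis that must be explicitly invoked for the bilinear bound.
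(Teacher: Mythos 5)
Your proposal is correct and follows essentially the same route as the paper: Lemma \ref{lemm:lin-est} with $p_3=p_1$ for the data term and with $p_3=p_1/2$ (via the subsequent Remark) combined with Lemma \ref{lemm:nonlin-est} for the bilinear term. Your explicit check that the third term of Lemma \ref{lemm:lin-est} is absorbed into the low-frequency part of $\n{F}_{D_{p_1,p_2;q}^{\alpha}}$ using $\alpha^{-1}2^j\leq 1$ on the range $2^j\leq\alpha$ is a detail the paper leaves implicit, and it is exactly right.
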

\begin{proof}
Using Lemma \ref{lemm:lin-est} with $p_3 = p_1$, we obtain the first estimate.
It follows from Lemma \ref{lemm:lin-est} with $p_3 =p_1/2$ and Lemma \ref{lemm:nonlin-est} that 
\begin{align}
    \n{\mathcal{D}[u \otimes v]}_{S_{p_1,p_2;q}^{\alpha}}
    \leq{}&
    C
    \alpha^{-\frac{1}{p_1}}
    \n{u \otimes v}_{\dB_{\frac{p_1}{2},p_2;q}^{\frac{3}{p_1} + \frac{1}{p_2}-2}}\\
    \leq{}&
    C
    \n{u}_{S_{p_1,p_2;q}^{\alpha}}
    \n{v}_{S_{p_1,p_2;q}^{\alpha}}.
\end{align}
Thus, we complete the proof.
\end{proof}
\section{Proofs of main results}
We are now in a position to present the proof of our main result.
\begin{proof}[Proof of Theorem \ref{thm:1}]
Let $C_0$ be the positive constant appearing in Corollary \ref{cor:lin-nonlin} and let $F \in D_{p_1,p_2;q}^{\alpha}(\mathbb{R}^2)$ satisfy
\begin{align}
    \n{F}_{D_{p_1,p_2;q}^{\alpha}}
    \leq 
    \frac{1}{8C_0^2}.
\end{align}
To construct a mild solution to \eqref{eq:u}, we consider a map 
\begin{align}
    \Phi[u]
    :=
    \mathcal{D}[F - u \otimes u]
\end{align}
on the complete metric space $(X_{p_1,p_2;q}^{\alpha},d_{X_{p_1,p_2;q}^{\alpha}})$ defined by 
\begin{align}
    &
    X_{p_1,p_2;q}^{\alpha}
    :={}
    \Mp{
    u \in S_{p_1,p_2;q}^{\alpha}(\mathbb{R}^2)\ ;\ 
    \n{u}_{S_{p_1,p_2;q}^{\alpha}}
    \leq 
    2C_0\n{F}_{D_{p_1,p_2;q}^{\alpha}}.
    },\\
    &
    d_{X_{p_1,p_2;q}^{\alpha}}(u,v)
    :={}
    \n{ u - v }_{S_{p_1,p_2;q}^{\alpha}}.
\end{align}
Then it follows from Corollary \ref{cor:lin-nonlin} that for any $u,v \in X_{p_1,p_2;q}^{\alpha}$,
\begin{align}
    \n{\Phi[u]}_{S_{p_1,p_2;q}^{\alpha}}
    \leq{}&
    C_0
    \n{F}_{D_{p_1,p_2;q}^{\alpha}}
    +
    C_0
    \n{u}_{S_{p_1,p_2;q}^{\alpha}}^2\\
    \leq{}&
    C_0
    \n{F}_{D_{p_1,p_2;q}^{\alpha}}
    +
    4C_0^3
    \n{F}_{D_{p_1,p_2;q}^{\alpha}}^2\\
    \leq{}&
    2C_0
    \n{F}_{D_{p_1,p_2;q}^{\alpha}}
\end{align}
and 
\begin{align}
    \n{\Phi[u] - \Phi[v]}_{S_{p_1,p_2;q}^{\alpha}}
    \leq{}&
    C_0
    \n{u}_{S_{p_1,p_2;q}^{\alpha}}
    \n{u-v}_{S_{p_1,p_2;q}^{\alpha}}
    +
    C_0
    \n{u-v}_{S_{p_1,p_2;q}^{\alpha}}
    \n{v}_{S_{p_1,p_2;q}^{\alpha}}\\
    \leq{}&
    4C_0^2
    \n{F}_{D_{p_1,p_2;q}^{\alpha}}
    \n{u-v}_{S_{p_1,p_2;q}^{\alpha}}\\
    \leq{}&
    \frac{1}{2}
    \n{u-v}_{S_{p_1,p_2;q}^{\alpha}},
\end{align}
which is implied by $\Phi[u]-\Phi[v] = \mathcal{D}[u \otimes (u-v)] + \mathcal{D}[(u-v)\otimes v]$.
Hence, $\Phi$ is a contraction map on $X_{p_1,p_2;q}^{\alpha}$, 
and thus it follows from the Banach fixed point principle that there exists a unique $u \in X_{p_1,p_2;q}^{\alpha}$ which yields a mild solution to \eqref{eq:u}.

For the uniqueness, if $u$ and $v$ are mild solution to \eqref{eq:u} in the class 
\begin{align}
    \Mp{
    u \in S_{p_1,p_2;q}^{\alpha}(\mathbb{R}^2)\ ;\ 
    \n{u}_{S_{p_1,p_2;q}^{\alpha}}
    \leq 
    \frac{1}{4C_0}
    },
\end{align}
where we note that the above set includes $X_{p_1,p_2;q}^{\alpha}$. 
Then, by Corollary \ref{cor:lin-nonlin}, it holds
\begin{align}
    \n{u-v}_{S_{p_1,p_2;q}^{\alpha}}
    ={}&
    \n{\Phi[u] - \Phi[v]}_{S_{p_1,p_2;q}^{\alpha}}\\
    \leq{}&
    C_0
    \n{u}_{S_{p_1,p_2;q}^{\alpha}}
    \n{u-v}_{S_{p_1,p_2;q}^{\alpha}}
    +
    C_0
    \n{u-v}_{S_{p_1,p_2;q}^{\alpha}}
    \n{v}_{S_{p_1,p_2;q}^{\alpha}}\\
    \leq{}&
    \frac{1}{2}
    \n{u-v}_{S_{p_1,p_2;q}^{\alpha}},
\end{align}
which implies $u=v$.
Thus, we complete the proof.
\end{proof}

\noindent
{\bf Data availability.} \\
Data sharing not applicable to this article as no datasets were generated or analysed during the current study.

\noindent
{\bf Conflict of interest.} \\
The author has declared no conflicts of interest.

\noindent
{\bf Acknowledgements.} \\
The first author was supported by Grant-in-Aid for Research Activity Start-up, Grant Number JP23K19011.
The second author was supported by Grant-in-Aid for JSPS Fellows and Early-Career Scientists, Grant Number JP22KJ1642 and JP24K16946, respectively.

\renewcommand{\refname}{References}

\end{document}